\definecolor{light-gray}{rgb}{0.6,0.6,0.6}
\newtheorem{theorem}{Theorem}[section]
\newtheorem{lemma}{Lemma}[section]
\newtheorem{prop}{Proposition}[section]
\newtheorem{assump}{Assumption}[section]
\newcommand{\eqnref}[1]{Eqn.~(\ref{#1})}
\newcommand{\expec}{\mathbb{E}}
\newcommand{\indicator}{1\hspace*{-0.15cm}1}
\DeclareMathOperator*{\argmax}{arg\,max}
\newcommand{\pderiv}[2]{\frac{\partial #1}{\partial #2}}
\newcommand{\pderivs}[2]{\frac{\partial^2 #1}{\partial #2^2}}
\newcommand{\pcderivs}[3]{\frac{\partial^2 #1}{\partial #2\partial #3}}
\theoremstyle{definition}
\newtheorem{define}{Definition}[section]
\newtheorem{remark}{Remark}[section]
\begin{document}

\title{Dynamic Bertrand Oligopoly}
\author{Andrew Ledvina\thanks{ORFE Department, Princeton University,
    Sherrerd Hall, Princeton NJ 08544; {\em
      aledvina@princeton.edu}. Work partially supported by NSF grant
    DMS-0739195.} \hspace{1cm} Ronnie Sircar\thanks{ORFE Department,
    Princeton University, Sherrerd Hall, Princeton NJ 08544; {\em
      sircar@princeton.edu}. Work partially supported by NSF grant
    DMS-0807440.}}
\date{April 8, 2010; revised June 8, 2010}
\maketitle

\renewcommand{\theenumi}{(\roman{enumi})}
\renewcommand{\labelenumi}{\theenumi}
\renewcommand{\qedsymbol}{$\blacksquare$}

\begin{abstract}
 \small{ We study continuous time Bertrand oligopolies in which a small
  number of firms producing similar goods compete with one another by
  setting prices. We first analyze a static version of this game in
  order to better understand the strategies played in the dynamic
  setting. Within the static game, we characterize the Nash
  equilibrium when there are $N$ players with heterogeneous costs. In
  the dynamic game with uncertain market demand, firms of different
  sizes have different lifetime capacities which deplete over time
  according to the market demand for their good. We setup the
  nonzero-sum stochastic differential game and its associated system
  of HJB partial differential equations in the case of linear demand
  functions.  We characterize certain qualitative features of the game
  using an asymptotic approximation in the limit of small
  competition. The equilibrium of the game is further studied using
  numerical solutions.  We find that consumers benefit the most when a
  market is structured with many firms of the same relative size
  producing highly substitutable goods. However, a large degree of
  substitutability does not always lead to large drops in price, for
  example when two firms have a large difference in their size.}
\end{abstract}

\section{Introduction}
We study competitive markets with a small number of players in which
firms use price as their strategic variable in an uncertain demand
environment. These are known as Bertrand oligopolies. The firms are
selling differentiated but substitutable goods.  Many products, for
instance consumer goods, are sold in markets that fit this structure.
An example might be Pepsi and Coca-Cola in the market for soft
drinks. Oil, coal and natural gas are commodities that can be
substituted for one another for energy production, but which have
different prices per unit of energy produced.

In this paper, we analyze price setting competition in the case of
differentiated goods, and in continuous time under randomly
fluctuating demands. These are nonzero-sum stochastic differential
games that may be characterized by systems of Hamilton-Jacobi-Bellman
PDEs. The large literature on oligopolistic competition deals
primarily with the static problem, and we refer to \citet{friedman83}
and \citet{vives99} for background and references.
\citet{cournot1838} provided the first analysis of an oligopoly where
firms' strategic interactions are taken into account. He assumed that
firms compete using \textit{quantity} as their strategic variable and
then take prices as determined by the market through an inverse demand
function, that is, a mapping from quantity to price. In a scathing
review of Cournot's paper, \citet{bertrand1883} argued that firms
compete using \textit{price} as their strategic variable and then
produce to clear the market demand arising from a demand function,
that is, a mapping from price to quantity. In actuality, some markets
may be better modeled as Cournot, and others as Bertrand, and we do
not enter that debate here.

In both these original models, however, the goods were homogeneous,
that is perfectly substitutable. This means that the only difference
between the firms is the price they set or the quantity they
produce. In the price setting game, this induces the behavior that,
if the prices between two goods are not equal, consumers will only
purchase the lower priced good. Here, we do not assume goods are
perfectly substitutable, and therefore, if firms have prices that
differ, they may all still receive some demand from the market.
Models of product differentiation originated with
\citet{hotelling29} and \citet{chamberlin33}, and were extended by
\citet{daspremont79}, among others. See \citet[Chapter
3]{friedman83} for an excellent discussion.

Most of the continuous-time models are in a linear-quadratic (LQ)
set-up, which has convenient analytical properties. We refer to
\citet{engwerda05} for details, and \citet{hamadene98} for an approach
via BSDEs.  \citet{vives04} work within the context of a differential
game of a duopoly with differentiated products. Their general model
allows for either Cournot or Bertrand competition in which the players
control the rate of change of the rate of production or price,
respectively, in the LQ setting.

There has been much recent interest in other types of stochastic
differential games. We mention, for example, \citet{lasryLions07jap},
who consider Mean Field Games in which there are a large number of
players and competition is felt only through an average of one's
competitors, with each player's impact on the average being
negligible. \citet{bensoussan09} study leader-follower differential
games in real options problems. \citet{ekeland08} and \citet{bjork08}
analyze time-inconsistent control problems which can be viewed as
games against one's future self. Energy markets in which a small
number of firms control supply can be viewed as Cournot oligopolies
and they are analyzed in the context of exhaustible resources in
\citet{exhres}.

The state variable of the firms in our model is their remaining lifetime capacity. This is a quantity
whose value at time zero represents all of the possible production a firm can undertake over its
lifetime before it goes out of business. The primary reason for this choice is that it is a
natural way to capture the notion of relative sizes of firms. A firm with a very large lifetime capacity
is a major market participant whose decisions greatly affect the prevailing price in the market. A
firm with a very small lifetime capacity has very little market power. Here, we analyze the effect
that participant size has on Bertrand markets over time. For producers
of consumer goods, the lifetime capacities could be proxied by past
volume of sales projected forward into the expected lifetime of the
firm. In the case of exhaustible resources, the state variables would
be estimates of remaining oil, coal or natural gas reserves, for
instance.

In Section \ref{sec:StaticGame}, we set up the static version of our
price setting game and prove the existence and uniqueness of a Nash
equilibrium.  The resulting equilibrium price functions are inputs for
Section \ref{sec:diffGame} where we present the dynamic
Bertrand game. We characterize the price strategies of the firms using
the solution to a system of coupled nonlinear PDEs. We analyze in
detail the problem of a duopoly with linear demand functions and the
related monopoly problem. Analytically, we obtain an asymptotic
expansion in powers of a parameter that represents the extent of
competition between the firms in a deterministic game. In Section
\ref{sec:numericalAnalysis}, we present the numerical solution of our
system of PDEs that allows us to characterize the price strategies and
resulting demands of firms in the stochastic game. Finally, in Section
\ref{sec:conclusion}, we conclude and discuss further lines of
research.

\section{Static Bertrand Game}\label{sec:StaticGame}
The main purpose of this paper is to analyze a dynamic price setting
game in continuous time. In order to fully understand this game, we
first analyze a static version of the game and prove existence and
uniqueness of a Nash equilibrium. This is used in establishing the
system of HJB PDEs of the dynamic game in the next section.

We assume a market with $N$ firms where each firm uses price as a strategic variable in noncooperative competition with the remaining firms.
Associated to each firm $i\in\left\{1,\ldots,N\right\}$ is a variable
$p_i\in\mathbb{R}_+$ that represents the price at which firm $i$
offers its good for sale to the market. We denote by $p$ the vector of
prices whose $i$th element is $p_i$.

\subsection{Systems of Demand}\label{sec:systemsOfDemand}
Given prices of the firms, we specify the resulting market demands
for each firm's good. For each firm $i\in\left\{1,\ldots,N\right\}$,
there exists a demand function
$D_i^N\left(p_1,p_2,\ldots,p_N\right): \mathbb{R}^N_+ \rightarrow
\mathbb{R}$. We first state some natural properties of these demand
functions.

\begin{assump}[Properties of Demand Functions]\label{assump:demandProp}
For all $i=1,\ldots,N$, $D^N_i$ is smooth in all variables, and
\begin{equation*}
D^N_N(0,\ldots,0) > 0, \qquad \pderiv{D^N_i}{p_i} < 0,\qquad \mbox{and}\qquad \pderiv{D^N_i}{p_j} > 0 \textrm{ for } i\neq j.
\end{equation*}
\end{assump}
We further assume that firms are distinguished only by the prices they set.
\begin{assump}[Exchangeability of Firms]\label{assump:sym}
For fixed $p_1,\ldots,p_N$ and all $i,j\in\left\{1,\ldots,N\right\}$,
\begin{equation*}
D^N_i\left(p_1,\ldots,p_i,\ldots,p_j,\ldots,p_N\right) = D_j^N\left(p_1,\ldots,p_j,\ldots,p_i,\ldots,p_N\right).
\end{equation*}
This implies that the demand function is invariant under permutations of the other firms' prices. That is, for any $j,k\in\left\{1,\ldots,N\right\}\backslash\left\{i\right\}$, we have
\begin{equation*}
D^N_i\left(p_1,\ldots,p_i,\ldots,p_j,\ldots,p_k\ldots,p_N\right) = D^N_i\left(p_1,\ldots,p_i,\ldots,p_k,\ldots,p_j\ldots,p_N\right).
\end{equation*}
\end{assump}
Smoothness of these functions is for convenience, and, naturally,
the market has positive demand if the prices are low enough. The key
assumption in the above is that demand for an individual firm is
decreasing in the firm's own price and increasing in the price of
their rivals. This assumption implies that we only deal with
substitute goods. A classical example is Coca-Cola versus Pepsi.
However, such goods can also be of different kinds and thus not
directly replaceable, yet they still exhibit substitutability. For
example, an iPod and a compact disc. One cannot directly replace the
other, but we expect a drop in the price of iPods to cause a drop in
the demand for compact discs. Contrary to this type of good, there
are goods known as complementary goods, such as hot dogs and hot dog
buns, but we do not consider those kinds of competition here.

We now make additional convenient assumptions.
\begin{assump}[Finite Choke Price]\label{assump:chokePriceN} Fix a firm $i\in\left\{1,\ldots,N\right\}$. For any fixed set of prices $p_{-i} \triangleq \left(p_1,\ldots,p_{i-1},p_{i+1},\ldots,p_N\right)$, we assume there exists a ``choke price", $\hat{p}_{i}\left(p_{-i}\right) < \infty$, such that
\begin{equation}\label{eqn:chokePriceDef}
D^{N}_{i}\left(p_1,\ldots,p_{i-1},\hat{p}_i,p_{i+1},\ldots,p_{N}\right) = 0.
\end{equation}
\end{assump}
Note that this ``choke price" is unique by Assumption \ref{assump:demandProp} because $\partial D^N_i/\partial p_i < 0$. This price is also positive by the same assumption and Assumption \ref{assump:sym} because $D^N_N\left(0,\ldots,0\right) = D^N_i\left(0,\ldots,0\right) > D^N_i\left(0,\ldots,0,\hat{p}\left(0\right),0,\ldots,0\right) = 0$. This implies $\hat{p}_i\left(0\right) > 0$.

\begin{remark}\label{remark:additive}For example, suppose each firm's demand depends on its rivals' prices only through their \textit{sum}: $D^N_i = f\left(p_i,\sum_{j\neq i}p_j\right)$ where $f(x,y):\mathbb{R}_+\times\mathbb{R}_+ \rightarrow \mathbb{R}$ is a smooth function which is increasing in $y$, decreasing in $x$, and such that there exists a solution $x$ to $f(x,y) = 0$ for every $y$. Then, it is easy to see that this demand system satisfies Assumptions \ref{assump:demandProp}, \ref{assump:sym} and \ref{assump:chokePriceN}.
\end{remark}

The actual demand that each firm faces cannot be negative as the firms are suppliers. For a fixed price vector $p$, we define $D_i(p)$, without the superscript, as the \textit{actual} demand firm $i$ receives in the market. Suppose first, for simplicity, that $p_1\leq p_2\leq\cdots\leq p_N$.
If this is not the case, then we can re-order the firms, carry out the following procedure, and then return them to their original order once their demands have been determined.
We next show that if prices are ordered, then this same ordering carries over to the demands.
\begin{prop}[Price order implies demand order]\label{prop:pOrderImpDOrderN}
Fix a vector of prices $p$. Suppose they are ordered such that $p_1\leq p_2\leq \cdots\leq p_N$. Then,
\begin{equation*}
D^N_1\left(p_1,\ldots,p_N\right) \geq D^N_2\left(p_1,\ldots,p_N\right) \geq\cdots\geq D^N_N\left(p_1,\ldots,p_N\right).
\end{equation*}
\end{prop}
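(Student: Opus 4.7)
The plan is to establish the chain of inequalities one link at a time, i.e., to show that $D^N_i(p) \geq D^N_{i+1}(p)$ for each $i\in\{1,\ldots,N-1\}$ whenever $p_i \leq p_{i+1}$; the full ordering then follows by transitivity. The key idea is to combine the exchangeability of firms (Assumption \ref{assump:sym}) with the sign conditions on the partial derivatives (Assumption \ref{assump:demandProp}).

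First, I would apply exchangeability with the pair $(i,i+1)$ to rewrite
\begin{equation*}
D^N_{i+1}(p_1,\ldots,p_i,p_{i+1},\ldots,p_N) = D^N_i(p_1,\ldots,p_{i+1},p_i,\ldots,p_N),
\end{equation*}
so that both quantities we want to compare are values of the \emph{same} function $D^N_i$, evaluated at two price vectors that differ only by a transposition of the $i$th and $(i+1)$st coordinates. The goal reduces to showing that transposing a smaller entry at position $i$ with a larger entry at position $i+1$ does not increase $D^N_i$.

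Next, I would connect these two points by the straight-line path and differentiate along it. Concretely, define
\begin{equation*}
g(t) := D^N_i\bigl(p_1,\ldots,p_{i-1},\, p_i + t(p_{i+1}-p_i),\, p_{i+1} - t(p_{i+1}-p_i),\, p_{i+2},\ldots,p_N\bigr), \qquad t\in[0,1],
\end{equation*}
so that $g(0) = D^N_i(p)$ and, by the rewriting above, $g(1) = D^N_{i+1}(p)$. A direct computation gives
\begin{equation*}
g'(t) = (p_{i+1}-p_i)\left[\pderiv{D^N_i}{p_i} - \pderiv{D^N_i}{p_{i+1}}\right],
\end{equation*}
where the partial derivatives are evaluated along the path. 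By Assumption \ref{assump:demandProp}, $\partial D^N_i/\partial p_i < 0$ and $\partial D^N_i/\partial p_{i+1} > 0$, so the bracket is strictly negative; combined with $p_{i+1}-p_i \geq 0$ this yields $g'(t) \leq 0$. Therefore $g(1) \leq g(0)$, giving the desired inequality $D^N_{i+1}(p) \leq D^N_i(p)$.

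There is no serious obstacle here: the only thing that needs care is the correct bookkeeping of indices when invoking exchangeability, since one must move from comparing two different functions $D^N_i$ and $D^N_{i+1}$ at the same point to comparing one function $D^N_i$ at two points related by a transposition. Once that reduction is in place, the sign of $g'$ is read off immediately from the demand monotonicity axioms, and iterating the pairwise inequality over $i=1,\ldots,N-1$ yields the full chain.
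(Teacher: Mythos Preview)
Your proof is correct and follows essentially the same approach as the paper: both use exchangeability (Assumption~\ref{assump:sym}) to reduce the comparison of $D^N_i$ and $D^N_{i+1}$ at $p$ to a comparison of $D^N_i$ at two points related by a transposition, and then invoke the sign conditions on $\partial D^N_i/\partial p_i$ and $\partial D^N_i/\partial p_j$. The only cosmetic difference is that the paper moves along an axis-aligned two-step path (first change coordinate $i$, then coordinate $i+1$) rather than your straight-line interpolation.
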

\begin{proof}
Using the ordering of the prices, the properties of the derivatives of the demand functions, and Assumption \ref{assump:sym}, we have
\begin{eqnarray*}
D^N_N\left(p_1,\ldots,p_{N-2},p_{N-1},p_N\right) &=& D^N_{N-1}\left(p_1,\ldots,p_{N-2},p_N,p_{N-1}\right) \\
&\leq& D^N_{N-1}\left(p_1,\ldots,p_{N-2},p_{N-1},p_{N-1}\right) \\
&\leq& D^N_{N-1}\left(p_1,\ldots,p_{N-2},p_{N-1},p_N\right).
\end{eqnarray*}
The result then follows for all $D^N_i$ by applying the same procedure.
\end{proof}
To determine the actual demand $D_i(p)$, we begin with the demand function $D^N_N(p_1,p_2,\ldots,p_N)$. If $D^N_N(p) \geq 0$, then it must be the case that $D^N_i(p) \geq 0$ for all $i=1,\ldots,N$ by Proposition \ref{prop:pOrderImpDOrderN}. The actual demand that each firm faces is $D^N_i(p)$. Hence,
\begin{equation*}
D_i(p) = D^N_i(p) ~~\textrm{ for all } i=1,\ldots, N,
\end{equation*}
and all the demands are determined. Otherwise, if $D^N_N(p) < 0$, then the price $p_N$ is too high relative to the preference structure of the market to make any sales in the market. Thus, this firm will receive no demand from the market and the demand of the remaining firms must reflect this fact. The demand for firm $N$ is set to zero, $D_N(p) = 0$, and the demand for the remaining firms is determined by considering their residual demand functions, which we now define.

Let us consider the general case of residual demand for $n$ firms when all firms $i > n$ receive zero demand at the current set of prices.

\begin{define}[Consistency of Demand]\label{define:consist} For each $n\in[1,N-1]$, we define the $n$-firm system of demand functions from the $(n+1)$-firm system of demand functions through
\begin{equation*}
D_i^n\left(p_1,p_2,\ldots,p_n\right) = D_i^{n+1}\left(p_1,p_2,\ldots,p_n,\hat{p}_{n+1}\right) \hspace{0.5cm} \textrm{ for } i=1,\ldots,n,
\end{equation*}
where $\hat{p}_{n+1}\left(p_1,\ldots,p_n\right)$ is defined by $D^{n+1}_{n+1}\left(p_1,\ldots,p_n,\hat{p}_{n+1}\right)=0$.
\end{define}

Definition \ref{define:consist} encapsulates that, if firm $n+1$ sets price so high that $D^{n+1}_{n+1}(p) < 0$, demands for firms $i<n+1$ are consistently adjusted as if firm $n+1$ set price $\hat{p}_{n+1}$ that realizes it exactly zero demand. This will give rise to actual demands which are continuous as a firm raises its price through the level at which it receives zero demand, and the market effectively has one less player. We shall see in Remark \ref{remark:genDemandSystems} that a common way of generating demand systems through a representative consumer's utility maximization problem yields a demand system with this consistency property.

In Definition \ref{define:consist}, we do not necessarily know that $\hat{p}_{n+1}$ exists for all $n$. We shall prove that they do exist in Proposition \ref{prop:existenceOfChokePrice}, but first we need to make one additional assumption.
\begin{assump}\label{assump:downwardSlopingDemand}
For all $n\in[1,N-1]$ and all $i=1,\ldots,n$, we assume
$\pderiv{D^n_i}{p_i} < 0$.
\end{assump}
We will see in Propositions \ref{prop:existenceOfChokePrice} and \ref{prop:inherit} that the other natural properties of Assumptions \ref{assump:demandProp}, \ref{assump:sym} and \ref{assump:chokePriceN} are inherited by the lower level demand functions. However, it is necessary to assume that lower level demand functions are decreasing in each player's own price.

\begin{prop}\label{prop:existenceOfChokePrice}For each $n\in[1,N-1]$, there exists a finite ``choke price" $\hat{p}_{n+1}(p_1,\ldots,p_n)$ where
\begin{equation*}
D^{n+1}_{n+1}\left(p_1,\ldots,p_n,\hat{p}_{n+1}\right) = 0.
\end{equation*}
\end{prop}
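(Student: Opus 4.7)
The approach is downward induction on $n$. The base case $n = N-1$ is Assumption \ref{assump:chokePriceN} applied to firm $N$. For the inductive step, assume $\hat{p}_{n+2}$ has been established as a finite smooth function, so that $D^{n+1}$ is well-defined via Definition \ref{define:consist}. Fix $(p_1,\ldots,p_n) \in \mathbb{R}_+^n$ and set $g(p) := D^{n+1}_{n+1}(p_1,\ldots,p_n,p)$ and $\psi(p) := \hat{p}_{n+2}(p_1,\ldots,p_n,p)$. I will apply the intermediate value theorem to $g$; uniqueness of the resulting zero will then follow from Assumption \ref{assump:downwardSlopingDemand} at level $n+1$, which gives $g'(p) < 0$.

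Continuity of $g$ is immediate: $\psi$ is smooth by the implicit function theorem (applicable because $\partial D^{n+2}_{n+2}/\partial p_{n+2} < 0$ by Assumption \ref{assump:downwardSlopingDemand}), so $g$ is smooth as well. For the sign at the left endpoint, use Assumption \ref{assump:sym} to swap the $(n+1)$-st and $(n+2)$-nd arguments:
\[
g(0) = D^{n+2}_{n+1}(p_1,\ldots,p_n,0,\psi(0)) = D^{n+2}_{n+2}(p_1,\ldots,p_n,\psi(0),0),
\]
and compare with the defining identity $D^{n+2}_{n+2}(p_1,\ldots,p_n,0,\psi(0)) = 0$. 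Moving between these two configurations raises a cross price (position $n+1$: $0 \to \psi(0)$) and lowers an own price (position $n+2$: $\psi(0) \to 0$); both motions strictly increase $D^{n+2}_{n+2}$ by Assumption \ref{assump:demandProp}, so $g(0) > 0$.

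The final and most delicate ingredient is producing a finite $p^*$ with $g(p^*) \le 0$, which I plan to obtain by exhibiting a fixed point of $\psi$. Indeed, if $\psi(p^*) = p^*$ then Assumption \ref{assump:sym} and the defining property of $\psi$ yield
\[
g(p^*) = D^{n+2}_{n+1}(p_1,\ldots,p_n,p^*,p^*) = D^{n+2}_{n+2}(p_1,\ldots,p_n,p^*,\psi(p^*)) = 0,
\]
so $\hat{p}_{n+1} := p^*$ discharges the claim. Existence of the fixed point is the main obstacle: $\psi$ is continuous with $\psi(0) > 0$ and, by implicit differentiation, $\psi'(p) = -(\partial D^{n+2}_{n+2}/\partial p_{n+1})/(\partial D^{n+2}_{n+2}/\partial p_{n+2}) > 0$, so it suffices to exhibit some $p_0$ with $\psi(p_0) \le p_0$. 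My plan for this is to expand $g'(p) < 0$ by the chain rule, invoke Assumption \ref{assump:sym} to relate the partials of $D^{n+2}_{n+1}$ to those of $D^{n+2}_{n+2}$, and derive a diagonal-dominance estimate on the $(p_{n+1},p_{n+2})$-Jacobian that prevents $\psi$ from remaining strictly above the diagonal. Once $p^*$ is in hand, the intermediate value theorem together with $g' < 0$ on $[0,p^*]$ produces the unique choke price $\hat{p}_{n+1} \in (0, p^*]$, completing the induction.
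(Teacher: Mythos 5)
Your setup (downward induction, IVT on $g(p)=D^{n+1}_{n+1}(p_1,\ldots,p_n,p)$, positivity at $p=0$ via a symmetry swap, uniqueness from Assumption \ref{assump:downwardSlopingDemand}) matches the paper's strategy, and the $g(0)>0$ step is fine modulo carrying $D^{n+2}_{n+2}(0,\ldots,0)>0$ along in the induction so that $\psi(0)>0$. But the step you yourself flag as ``the main obstacle'' is a genuine gap, and the plan you sketch for it cannot close it. The reformulation as a fixed point of $\psi$ gains nothing: by exactly the swap you use elsewhere, $g(p)=D^{n+2}_{n+2}(p_1,\ldots,p_n,\psi(p),p)$ while $D^{n+2}_{n+2}(p_1,\ldots,p_n,p,\psi(p))=0$, and passing between these two configurations moves a cross price and an own price in opposite directions, so $g(p)\le 0$ holds \emph{if and only if} $\psi(p)\le p$. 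Thus ``find $p_0$ with $\psi(p_0)\le p_0$'' is a verbatim restatement of ``find $p_0$ with $g(p_0)\le 0$,'' not a reduction. Moreover, the proposed route to it via $g'<0$ cannot work on its own: the hypotheses are purely qualitative sign conditions, and a strictly decreasing function with $g(0)>0$ may stay positive forever (decreasing to a positive limit), so no diagonal-dominance estimate extractable from $g'<0$ alone can force $\psi$ below the diagonal. You would need a quantitative lower bound on $|g'|$ or an upper bound on $g$ at infinity, neither of which is assumed.

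The paper's device at this point is different and is the one ingredient you are missing: for an arbitrary $p_{n+1}$, set $\tilde p=\hat p_{n+2}(p_1,\ldots,p_n,p_{n+1})=\psi(p_{n+1})$ and compare $p_{n+1}$ with $\hat p_{n+2}(p_1,\ldots,p_n,\tilde p)=\psi(\tilde p)$. In the case $p_{n+1}\ge\psi(\tilde p)$ one gets directly, by the symmetry swap and own-price monotonicity of $D^{n+2}_{n+2}$,
\begin{equation*}
g(p_{n+1})=D^{n+2}_{n+1}(p_1,\ldots,p_n,p_{n+1},\tilde p)=D^{n+2}_{n+2}(p_1,\ldots,p_n,\tilde p,p_{n+1})\le D^{n+2}_{n+2}(p_1,\ldots,p_n,\tilde p,\psi(\tilde p))=0,
\end{equation*}
and in the complementary case the paper evaluates $g$ at the specific point $\tilde p$ instead. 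That is, non-positivity is exhibited at a concretely constructed price built from the level-$(n+2)$ choke price, rather than deduced from monotonicity of $g$. You should replace your fixed-point plan with an argument of this type (or otherwise exhibit an explicit price at which the residual demand is non-positive); as written, the existence half of the proposition is not proved.
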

\begin{proof}
See Appendix \ref{appendix:proof}.
\end{proof}

\begin{prop}[Inherited Demand Properties]\label{prop:inherit}
The functions $D_i^n$, as defined in Definition \ref{define:consist}, are smooth in all variables, and
$\pderiv{D^n_i}{p_j} > 0 \textrm{ for } i\neq j$, and $i=1,\ldots,n$.

The functions $D_i^n$ also inherit the symmetry of the functions $D_i^N$ from Assumption \ref{assump:sym}. Furthermore, they inherit the ordering of demand shown in Proposition \ref{prop:pOrderImpDOrderN}. That is, for a fixed vector of prices $p$, ordered such that $p_1\leq p_2\leq \cdots\leq p_N$, we have for any $1\leq n\leq N$
\begin{equation*}
D^n_1\left(p_1,\ldots,p_n\right) \geq D^n_2\left(p_1,\ldots,p_n\right) \geq\cdots\geq D^n_n\left(p_1,\ldots,p_n\right).
\end{equation*}
\end{prop}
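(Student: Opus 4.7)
The plan is to argue by \emph{downward induction} on $n$, starting from the base case $n = N$, where all four conclusions are exactly Assumptions \ref{assump:demandProp} and \ref{assump:sym} together with Proposition \ref{prop:pOrderImpDOrderN}. In the inductive step I would assume that smoothness, positive cross-partials, symmetry, and the ordering all hold at level $n+1$, and carry each one down to level $n$ by analyzing the composition
\begin{equation*}
D^n_i(p_1,\ldots,p_n) = D^{n+1}_i\bigl(p_1,\ldots,p_n,\hat{p}_{n+1}(p_1,\ldots,p_n)\bigr), \qquad i = 1,\ldots,n.
\end{equation*}

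First, existence of $\hat{p}_{n+1}$ is already provided by Proposition \ref{prop:existenceOfChokePrice}, and its smoothness follows from the Implicit Function Theorem applied to $D^{n+1}_{n+1}(p_1,\ldots,p_n,\hat{p}_{n+1}) = 0$, with the nonvanishing hypothesis furnished by Assumption \ref{assump:downwardSlopingDemand}. Implicit differentiation then yields
\begin{equation*}
\pderiv{\hat{p}_{n+1}}{p_j} = -\frac{\partial D^{n+1}_{n+1}/\partial p_j}{\partial D^{n+1}_{n+1}/\partial p_{n+1}} > 0, \qquad j = 1,\ldots,n,
\end{equation*}
the sign being dictated by a positive numerator (inductive cross-partial at level $n+1$) and negative denominator (Assumption \ref{assump:downwardSlopingDemand}). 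Smoothness of $D^n_i$ is then immediate by composition, and the chain rule
\begin{equation*}
\pderiv{D^n_i}{p_j} = \pderiv{D^{n+1}_i}{p_j} + \pderiv{D^{n+1}_i}{p_{n+1}} \cdot \pderiv{\hat{p}_{n+1}}{p_j}
\end{equation*}
exhibits both summands as positive whenever $i \neq j$ with $i,j \in \{1,\ldots,n\}$, yielding $\partial D^n_i/\partial p_j > 0$.

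For exchangeability, the inductive symmetry of $D^{n+1}_{n+1}$ in its first $n$ arguments forces the unique implicit root $\hat{p}_{n+1}$ to be a symmetric function of $(p_1,\ldots,p_n)$, so swapping $p_i$ and $p_j$ in the composition leaves $\hat{p}_{n+1}$ invariant; the required swap identity for $D^n_i$ versus $D^n_j$ then reduces immediately to the inductive swap identity for $D^{n+1}$. With exchangeability and strict positivity of cross-partials of $D^n_i$ in hand, the ordering $D^n_1 \geq \cdots \geq D^n_n$ under $p_1 \leq \cdots \leq p_n$ follows by repeating verbatim the swap-and-monotonicity argument of Proposition \ref{prop:pOrderImpDOrderN}, one level lower, now using Assumption \ref{assump:downwardSlopingDemand} for own-price monotonicity at level $n$.

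The main obstacle I anticipate is the careful sign bookkeeping in the Implicit Function Theorem step, where Assumption \ref{assump:downwardSlopingDemand} is indispensable: it simultaneously supplies the nondegeneracy needed for the Implicit Function Theorem and pins down the positive sign of $\partial \hat{p}_{n+1}/\partial p_j$, which is in turn what makes the chain-rule computation for cross-partials come out positive. Once this step is in place, the symmetry transfer and the ordering inequality follow by essentially formal manipulations inherited from the higher-level system.
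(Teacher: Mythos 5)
Your proof is correct and follows essentially the same route as the paper: implicit differentiation of the choke-price identity to get $\partial\hat{p}_{n+1}/\partial p_j>0$, the chain rule for the cross-partials, and a rerun of the swap-and-monotonicity argument of Proposition \ref{prop:pOrderImpDOrderN} at level $n$. The only difference is presentational -- you make the downward induction and the Implicit Function Theorem step explicit where the paper carries out the computation at level $N-1$ and asserts that the lower levels follow identically.
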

\begin{proof}
The smoothness of the functions $D_i^n$ clearly comes directly from
that of $D^N_i$. Furthermore, the symmetry of these functions is
also clearly inherited. To show the positive transverse derivative,
we simply compute. We show only at the level $N-1$; for all
$n\in[1,N-1]$, the property will follow in the exact same way from
the function at the level $n+1$ using Assumption
\ref{assump:downwardSlopingDemand}. We first take the derivative of
\eqnref{eqn:chokePriceDef} with respect to $p_j$ for $j\neq N$,
which gives
\begin{equation*}
\pderiv{\hat{p}_N}{p_j} = -\frac{\partial D^N_N /\partial p_j}{\partial D^N_N/\partial p_N} > 0, \mbox{ because }\pderiv{D^N_N}{p_j} > 0 \mbox{ and } \pderiv{D^N_N}{p_N} < 0.
\end{equation*}

Then, for $i=1,\ldots,N-1$ and $j\in\left\{1,\ldots,N-1\right\}\backslash\left\{i\right\}$, we have
\begin{equation*}
\pderiv{D^{N-1}_i}{p_j} = \pderiv{D^N_i}{p_j} + \pderiv{D^N_i}{p_N}\pderiv{\hat{p}_{N}}{p_j}.
\end{equation*}
This is positive because of Assumption \ref{assump:demandProp} and because $\pderiv{\hat{p}_N}{p_j} > 0$ for any $j\neq N$. With these properties, the ordering of demands follows by applying the exact same proof as Proposition \ref{prop:pOrderImpDOrderN}.
\end{proof}

We can now specify completely the demand function that each firm faces for a given set of prices.

\begin{define}\label{actdem}(Actual Demands) Given an ordered price vector $p$:
\begin{itemize}
\item If $D_N^N(p) \geq 0$, then $D_i(p) = D^N_i(p)$ for all $i=1,\ldots,N$.
\item Otherwise, find $n\in\left\{1,\ldots,N-1\right\}$ such that
\begin{equation*}
D^{n+1}_{n+1}\left(p_1,\ldots,p_n,p_{n+1}\right) < 0,
\qquad \textrm{ and } \qquad
D^{n}_n\left(p_1,\ldots,p_n\right) \geq 0.
\end{equation*}
For such an $n$, the actual demands of firms $n+1,\ldots,N$ are equal to zero, and $D_i^n$ give the actual demands for each firm $i\in\left\{1,\ldots,n\right\}$:
\begin{equation*}
D_i(p) = \left\{\begin{array}{ccl} D^n_i\left(p_1,p_2,\ldots,p_n\right) & & \textrm{ for } i=1,\ldots,n \\
0 & &\textrm{ for } i=n+1,\ldots,N\end{array}\right.
\end{equation*}
\item If no such $n$ exists, then $D_i(p) = 0$ for all $i=1,\ldots,N$.
\end{itemize}
\end{define}

\subsection{Example: Linear Demand}\label{sec:linDemandDef}
We present demand functions that are affine in the prices of all
firms. This is the demand structure we will use in the dynamic game of
the following sections. For fixed $N$, we start with positive
parameters $A, B, C$ such that $B >(N-1)C$.  This latter
condition on the parameters will be justified in what follows. With
these parameters, we define
\begin{equation}\label{eq:dNDefine}
D_i^N\left(p_1,\ldots,p_N\right) \triangleq A - Bp_i + C\sum_{j\neq i} p_j, ~~\textrm{ for } i=1,\ldots,N.
\end{equation}
Notice that $D^N_N(0,\ldots,0) = A > 0$, and $D_i^N$ is of the form given in Remark \ref{remark:additive}. Therefore the demand functions satisfy Assumptions \ref{assump:demandProp}, \ref{assump:sym} and \ref{assump:chokePriceN}.

\begin{prop}\label{prop:linDemand}For each $n\in[1,N-1]$, we have
\begin{equation}\label{eq:dnConsistDefine}
D_i^n\left(p_1,\ldots,p_n\right) = a_n - b_np_i + c_n\sum_{\underset{j\leq n}{j\neq i}}p_j, ~~\textrm{ for } i=1,\ldots,n,
\end{equation}
where, for $2\leq n\leq N$,
\begin{equation}
a_{n-1} = a_n\left(1+\frac{c_n}{b_n}\right), \qquad b_{n-1} = b_n\left(1 - \frac{c_n^2}{b_n^2}\right), \qquad c_{n-1} = c_n\left(1 + \frac{c_n}{b_n}\right),\label{recursiveABC}
\end{equation}
with $a_N = A$, $b_N = B$ and $c_N = C$.
\end{prop}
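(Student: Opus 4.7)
The plan is to prove this by downward induction on $n$, starting from $n=N$ and working down to $n=1$, using the consistency relation of Definition \ref{define:consist} as the engine of the recursion. The base case $n=N$ is immediate since $a_N = A$, $b_N = B$, $c_N = C$ and \eqnref{eq:dnConsistDefine} then reduces to \eqnref{eq:dNDefine}. For the inductive step, I assume $D_i^{n+1}(p_1,\ldots,p_{n+1}) = a_{n+1} - b_{n+1}p_i + c_{n+1}\sum_{j\neq i,\,j\leq n+1}p_j$ and seek to derive the same affine form for $D_i^n$.

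First, I compute the choke price $\hat{p}_{n+1}$ explicitly. The equation $D_{n+1}^{n+1}(p_1,\ldots,p_n,\hat{p}_{n+1})=0$ becomes $a_{n+1} - b_{n+1}\hat{p}_{n+1} + c_{n+1}\sum_{j=1}^n p_j = 0$, which (since $b_{n+1}>0$) yields $\hat{p}_{n+1} = (a_{n+1} + c_{n+1}\sum_{j=1}^n p_j)/b_{n+1}$. Notice that this depends symmetrically on $p_1,\ldots,p_n$, which will be crucial in preserving the symmetric affine structure. Then, for $i\leq n$, I substitute into
\begin{equation*}
D_i^{n+1}(p_1,\ldots,p_n,\hat{p}_{n+1}) = a_{n+1} - b_{n+1}p_i + c_{n+1}\!\!\sum_{\substack{j\neq i\\ j\leq n}}\!\!p_j + c_{n+1}\hat{p}_{n+1},
\end{equation*}
expand $c_{n+1}\hat{p}_{n+1}/1$ using the explicit formula, and group terms in $1$, $p_i$, and $p_j$ for $j\neq i$.

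Collecting the constant yields $a_{n+1}(1 + c_{n+1}/b_{n+1})$; the coefficient of $p_i$ becomes $-b_{n+1} + c_{n+1}^2/b_{n+1} = -b_{n+1}(1 - c_{n+1}^2/b_{n+1}^2)$; and the coefficient of each $p_j$ with $j\neq i,\,j\leq n$ becomes $c_{n+1} + c_{n+1}^2/b_{n+1} = c_{n+1}(1+c_{n+1}/b_{n+1})$. Reading these three off and identifying them with $a_n$, $-b_n$, and $c_n$ gives exactly the recursion \eqnref{recursiveABC} (shifted so that $n$ plays the role of $n-1$). By Definition \ref{define:consist}, this is $D_i^n(p_1,\ldots,p_n)$, completing the induction.

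The main obstacle is not any conceptual one but rather a consistency check running alongside the induction: to make sense of Definition \ref{define:consist} at each level and to divide by $b_{n+1}$ when solving for $\hat{p}_{n+1}$, I need to know that $b_n>0$ (in fact $b_n > (n-1)c_n$) for every $n$. I would verify this as a short side induction from the recursion: if $b_{n+1} > n\,c_{n+1} > 0$, then $b_n = (b_{n+1}-c_{n+1})(b_{n+1}+c_{n+1})/b_{n+1}$ and $c_n = c_{n+1}(b_{n+1}+c_{n+1})/b_{n+1}$, and a direct computation shows $b_n - (n-1)c_n = (b_{n+1}+c_{n+1})(b_{n+1}-nc_{n+1})/b_{n+1} > 0$. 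Since the hypothesis $B > (N-1)C$ is exactly the base case, this propagates down and both guarantees well-definedness of the choke prices (matching Proposition \ref{prop:existenceOfChokePrice}) and confirms Assumption \ref{assump:downwardSlopingDemand} in the linear setting.
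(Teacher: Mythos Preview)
Your argument is correct and follows essentially the same route as the paper: compute the choke price $\hat p_{n+1}=b_{n+1}^{-1}\bigl(a_{n+1}+c_{n+1}\sum_{j\le n}p_j\bigr)$, substitute into $D_i^{n+1}$, and read off the recursion~\eqref{recursiveABC}, iterating downward in $n$. The only difference is that you fold the positivity check $b_n>(n-1)c_n>0$ into the same induction, whereas the paper postpones this to Proposition~\ref{prop:recursionSoln}, where it is verified via the closed-form expressions~\eqref{ABCintermsOfOthers}.
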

\begin{proof}
Using Definition \ref{define:consist}, we solve for the choke price $\hat{p}_N$ by setting $D_N^N$ in \eqnref{eq:dNDefine} to zero and solving for $p_N$. This results in
$\hat{p}_{N} = B^{-1}\left(A + C\sum_{i=1}^{N-1} p_i\right)$.
Substituting into \eqnref{eq:dNDefine} we obtain, for $i=1,\ldots,N-1$,
\begin{equation*}
D_i^{N-1}(p_1,\ldots,p_{N-1}) = A\left(1 + \frac{C}{B}\right) - B\left(1 - \frac{C^2}{B^2}\right)p_i + C\left(1+\frac{C}{B}\right)\sum_{\underset{j\leq N-1}{j\neq i}}p_j,
\end{equation*}
which establishes \eqnref{recursiveABC} for $n=N$.
We can repeat this procedure for $1\leq n\leq N-1$, where we find
$\hat{p}_{n+1} = b_{n+1}^{-1}\left(a_{n+1} + c_{n+1}\sum_{i=1}^n p_i\right)$,
and this results in the demand system at the level $n$ being given by \eqnref{eq:dnConsistDefine} with the recursively defined parameters given in \eqnref{recursiveABC}.
\end{proof}

\begin{prop}\label{prop:recursionSoln}The explicit solution of the recursion (\ref{recursiveABC}) is given by
\begin{equation}
a_n = \frac{\alpha}{\beta+(n-1)\gamma}, \qquad b_n = \frac{\beta+(n-2)\gamma}{(\beta+(n-1)\gamma)(\beta-\gamma)}, \qquad c_n = \frac{\gamma}{(\beta+(n-1)\gamma)(\beta-\gamma)},\label{ABCintermsOfOthers}
\end{equation}
where we define
\begin{equation}
\gamma =
\frac{C}{\left(B-(N-1)C\right)\left(B+C\right)}, \qquad \alpha = \gamma\cdot A\cdot\left(\frac{B}{C}+1\right), \qquad \beta =
\gamma\cdot\left(\frac{B}{C}-(N-2)\right). \label{alphaSoln}
\end{equation}
\end{prop}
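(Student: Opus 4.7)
The plan is downward induction on $n$, descending from $n = N$ using the recursion \eqnref{recursiveABC}.

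For the base case $n = N$, one substitutes into the proposed closed forms and checks they collapse to $a_N = A$, $b_N = B$, $c_N = C$. Using the definitions in \eqnref{alphaSoln}, one computes
\[
\beta + (N-1)\gamma = \gamma\Bigl(\tfrac{B}{C} + 1\Bigr), \qquad \beta - \gamma = \gamma\Bigl(\tfrac{B}{C} - (N-1)\Bigr),
\]
so that $(\beta + (N-1)\gamma)(\beta - \gamma) = \gamma^{2}(B+C)(B - (N-1)C)/C^{2}$, and the identity $\gamma(B+C)(B - (N-1)C) = C$ built into the definition of $\gamma$ collapses everything to the desired values.

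For the inductive step, the key simplification is to work with the single ratio $r_n := c_n/b_n$ rather than $b_n$ and $c_n$ separately. Assuming the formulas hold at level $n$, a direct division gives
\[
r_n = \frac{\gamma}{\beta + (n-2)\gamma}, \qquad 1 + r_n = \frac{\beta + (n-1)\gamma}{\beta + (n-2)\gamma}, \qquad 1 - r_n^{2} = \frac{(\beta + (n-3)\gamma)(\beta + (n-1)\gamma)}{(\beta + (n-2)\gamma)^{2}}.
\]
Substituting into \eqnref{recursiveABC}, the factor $\beta + (n-1)\gamma$ in the denominator of each of $a_n, b_n, c_n$ cancels against the $\beta + (n-1)\gamma$ coming from $1+r_n$ (or from $1 - r_n^2$), leaving $\beta + (n-2)\gamma$ as the new index-dependent factor. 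This yields exactly the claimed formulas for $a_{n-1}, b_{n-1}, c_{n-1}$.

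An alternative, slightly slicker route is to derive a recursion for $r_n$ first: \eqnref{recursiveABC} gives $r_{n-1} = r_n/(1 - r_n)$, whence $1/r_n$ is arithmetic with $1/r_{n-1} = 1/r_n - 1$. With the initial condition $r_N = C/B$, this solves as $1/r_n = B/C + n - N = (\beta + (n-2)\gamma)/\gamma$, matching the formula. Once $r_n$ is in hand, the $c$-recursion $c_{n-1} = c_n(1 + r_n)$ becomes a first-order linear recursion that telescopes; then $b_n = c_n/r_n$ and $a_n = \alpha c_n/\gamma$ follow.

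The only real obstacle is bookkeeping: one must track the three shifted index factors $\beta + (n-k)\gamma$ for $k \in \{1,2,3\}$ carefully so that the telescoping in the inductive step closes cleanly, and the base case requires the specific algebraic form of $\alpha,\beta,\gamma$. There is no conceptual difficulty.
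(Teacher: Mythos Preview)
Your argument is correct and follows essentially the same route as the paper: direct algebraic verification that the proposed closed forms satisfy the recursion \eqnref{recursiveABC} together with the terminal condition $(a_N,b_N,c_N)=(A,B,C)$; your inductive presentation via the ratio $r_n=c_n/b_n$ simply makes explicit the ``simple algebra'' the paper invokes. The one item you omit, which the paper does include, is the check that the formulas are well-defined and positive for all $1\le n\le N$ (i.e.\ that $\beta>\gamma>0$ under the standing assumption $B>(N-1)C$, so no denominator vanishes); this is easy but worth stating.
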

\begin{proof}
Simple algebra shows that the expressions in \eqnref{ABCintermsOfOthers} satisfy the
recursions in \eqnref{recursiveABC}. All that remains to show is
that $a_n,b_n,c_n$ are positive and well-defined. By examination of
\eqnref{ABCintermsOfOthers}, this will be the case provided
$\alpha,\beta,\gamma$ are positive, and $\beta>\gamma$ because of the
denominator in the last two expressions in
\eqnref{ABCintermsOfOthers}. We see from the first expression in
\eqnref{alphaSoln} that $\alpha>0$ if $\gamma>0$. Furthermore, we have
that $\beta>\gamma$, and therefore $\beta>0$ if $\gamma>0$ and if $B
>(N-1)C$. This is exactly the condition we assumed above on the
parameters $B$ and $C$. Therefore, we need only show that $\gamma>0$,
but this again will be true if $B  >(N-1)C$. Hence, we have that
$a_n,b_n,c_n$ are positive for all $n$.\end{proof}

\begin{remark}
Note that Assumption \ref{assump:downwardSlopingDemand} is satisfied by the demand functions in \eqnref{eq:dnConsistDefine} because $\pderiv{D^n_i}{p_i} = -b_n < 0$, for all $n\leq N$ and $i\leq n$.
\end{remark}

\begin{remark}[Generating Demand Systems]\label{remark:genDemandSystems}
  One can generate demand systems that satisfy our assumptions by
  starting with a utility function and using the utility maximization
  problem of a representative consumer. Let $U(q):\mathbb{R}^N_+
  \rightarrow \mathbb{R}$ be a smooth and strictly concave utility
  function, where $q$ is a vector representing quantities of the
  different products. We assume that a representative consumer solves
  the problem of maximizing utility of consumption minus the cost of
  that consumption: $\max_q U(q) - pq$.  One then obtains inverse
  demands from the first order conditions of this maximization problem
  $p = \nabla U$. The Jacobian of the inverse demand system equals the
  Hessian of $U$, which implies the system is invertible. We obtain
  the direct demand system $\{D_i^N\}$ by inverting this system for quantity as a
  function of price. Concavity of $U$ implies $\partial D_i^N/\partial p_i
  < 0$, but we cannot tell directly from $U$ if $\partial D_i^N/\partial
  p_j \geq 0$ holds for $j\neq i$. Therefore, an additional assumption
  must be made at the level of the demand functions in order to model
  substitute goods. However, the consistency property  in Definition
  \ref{define:consist} is guaranteed.  Suppose firm $N$ is removed from the utility
  function, then the demand functions $\{D_i^{N-1}\} $ derived this
  way, inverting $\nabla U$ after setting $q_N=0$, are consistent with
  the $\{D_i^N\}$, and similarly for the lower level demand functions
  $\{D_i^n\}$.
The linear system of demand introduced in Section
  \ref{sec:linDemandDef} can be obtained by using the quadratic utility function
\begin{equation*}
U(q) = \alpha \sum_{i=1}^Nq_i -
\frac{1}{2}\left(\beta\sum_{i=1}^Nq_i^2 + \gamma\underset{i\neq
    j}{\sum\sum}q_iq_j\right).
\end{equation*}
While it is not necessary to assume that demand is derived from
utility, it can be shown that, under some mild conditions, given a
system of demand, there exist preferences that rationalize that
demand, which result in a utility function consistent with that demand
system. See \citet[Section 3.H]{mwg95} for more details.
\end{remark}

\subsection{Nash Equilibrium}\label{subsec:StaticGame:NE}
We now analyze the static Bertrand game. Each firm
$i\in\left\{1,\ldots,N\right\}$ has an associated constant marginal
cost, denoted by $s_i$. We denote by $s$ the vector of costs with
$i$th element equal to $s_i$. Each firm chooses its price to maximize
profit in a non-cooperative manner, but they must do so while taking
into account the actions of all other firms. Firms
choose  prices to maximize profit in the sense of Nash equilibrium.
The profit function $\Pi_i: \mathbb{R}^N\times\mathbb{R}
\rightarrow\mathbb{R}_+$ for firm $i$ is given by
\begin{equation}\label{eq:profitFunction}
\Pi_i \left(p_1,p_2,\ldots,p_N,s_i\right) \triangleq D_i(p)\cdot\left(p_i - s_i\right),
\end{equation}
where $D_i(p)$ was defined using the procedure in Section \ref{sec:systemsOfDemand}.

In order to simplify exposition, we assume, possibly after a suitable
relabeling, that firms are ordered by costs:
$0 \leq s_1 \leq s_2 \leq \cdots \leq s_N$.
\begin{define}\label{define:NE}
A vector of prices $p^{\star} = \left(p_1^{\star},p_2^{\star},\ldots,p_N^{\star}\right)$, is a
\textbf{Nash equilibrium} of the Bertrand game if
\begin{equation}\label{eq:NEaddition}
p_i^{\star} = s_i \textrm{ whenever } D_i\left(p^{\star}\right)= 0,
\end{equation}
and
\begin{equation}
p_i^{\star}  = \argmax_{p\geq s_i}
\Pi_i\left(p_1^{\star},p_2^{\star},\ldots,p_{i-1}^{\star},p,p_{i+1}^{\star},\ldots,p_N^{\star},s_i\right)
\label{staticBertrandNashEqDefMaxCondition}
\end{equation}
for all $i=1,\ldots,N$.
\end{define}
\eqnref{staticBertrandNashEqDefMaxCondition} says the Nash equilibrium
is a fixed point of best-responses.  \eqnref{eq:NEaddition} says that
whenever a firm receives zero demand in equilibrium, it sets its price
equal to cost, which is a best-response, meaning it satisfies
\eqnref{staticBertrandNashEqDefMaxCondition}. This makes the
best-response $p_i^\star$ a well-defined function.

In the game with heterogeneous costs, some firms may receive zero
demand, and so we first consider subgames which, for $n=1,\ldots,
N$, involve only the first $n$ players. Let $p^{\star,n} =
\left(p_1^{\star,n},\ldots,p_n^{\star,n},\right.$
$\left.s_{n+1},\ldots, s_{N}\right)$, where the first $n$ components
solve the Nash equilibrium problem with profit functions
$\Pi^n_i(p_1,\ldots,p_n) = D_i^n(p_1,\ldots,p_n)\cdot(p_i-s_i)$ and
$D_i^n$, introduced in Definition \ref{define:consist}, is the
demand function for the $n$-player game. In other words,
\begin{equation}
  p_i^{\star,n}  = \argmax_{p\geq 0}
  \Pi_i^n\left(p_1^{\star,n},p_2^{\star,n},\ldots,p_{i-1}^{\star,n},p,p_{i+1}^{\star,n},\ldots,p_n^{\star,n}\right),
  \qquad i=1,\ldots,n.
\label{substaticBertrandNashEqDefMaxCondition}
\end{equation}
\begin{assump}
  We assume that, for each $n=1,\ldots,N$, there
  exists a unique solution to the system of maximization problems in
  \eqnref{substaticBertrandNashEqDefMaxCondition}.
\end{assump}
Sufficient conditions for the existence of a unique best-response
function for each player are existence of a unique solution to the
first-order conditions:
\begin{equation}\label{eq:FOC}
\pderiv{D_i}{p_i}(p^{\star,n})\left(p^{\star,n}_i - s_i\right) +
D_i(p^{\star,n}) = 0, \qquad i=1,\ldots,n,
\end{equation}
and strict concavity of $\Pi^n_i$ as a function of $p_i$. It is
straightforward to show that the latter is implied if we adopt the
assumption that
$D_i^n(p_1,\ldots,p_n)$ is concave as a function of $p_i$; some weaker
conditions on the $D_i^n$ are discussed in \citet[Chapter 6]{vives99},
but we do not pursue those here. Finally, for a unique intersection of
the best-reponse functions, hence a unique solution to
\eqnref{substaticBertrandNashEqDefMaxCondition}, a well-known
sufficient condition is diagonal dominance of the Hessian of $\Pi_i^n$:
\begin{equation}
\frac{\partial^2\Pi_i^n}{\partial p_i^2} + \sum_{j\neq i} \left\vert
  \frac{\partial^2 \Pi_i^n}{\partial p_i\partial p_j}\right\vert < 0,
\qquad i=1,\ldots,n.
\label{eq:diagDomCond}
\end{equation}
Again, we refer to \citet{vives99} for details.

In the subgames, prices $p_i^{\star,n}$ are non-negative, but the
resulting demands may be negative. Therefore, these are only initial
candidates for the Nash equilibrium of our problem, but they are used
in the proof of the next section. We will also provide an example of
such a Nash equilibrium under linear demand functions.

\renewcommand{\theenumi}{$\langle$\Roman{enumi}$\rangle$}
\subsubsection{Existence and Construction of Nash Equilibrium}

Let $p^{\star}$ denote the vector of prices in equilibrium. We will
see that the Nash Equilibrium will be one of three types:
\begin{enumerate}
\item\label{NEtype1} All $N$ firms price above cost. In this case,
  $p_i^{\star} > s_i$ for all $i=1,\ldots,N$, and the Nash equilibrium
  is simply the $N$-player interior Nash equilibrium given by
  $p^{\star}=\left(p_1^{\star,N},\ldots,p_N^{\star,N}\right)$, where
  the $p_i^{\star,N}$ solve
  \eqnref{substaticBertrandNashEqDefMaxCondition} with $n=N$.

\item\label{NEtype2} For some $0\leq n< N$, firms $1,\ldots,n$ price
  strictly above cost and the remaining firms set price equal to cost.
  In other words, $p_i^{\star} > s_i$ for $i=1,\ldots,n$, and $p_j =
  s_j$ for $j=n+1,\ldots,N$. The first $n$ firms play the interior
  $n$-player sub-game equilibrium as if firms $n+1,\ldots,N$ do not
  exist. These firms are completely ignorable because their costs are
  too high. The Nash equilibrium is
  $p^{\star}=\left(p_1^{\star,n},\ldots,p_n^{\star,n},s_{n+1},\ldots,s_{N}\right)$.

\item\label{NEtype3} For some $(k,n)$ such that $0 \leq k < n\leq N$,
  firms $1,\ldots, k$ price strictly above cost (if $k=0$ then no
  firms price strictly above cost), and the remaining firms set price
  equal to cost. In other words, $p_i^{\star} > s_i$ for
  $i=1,\ldots,k$ and $p_j^{\star} = s_j$ for $j=k+1,\ldots,N$. This
  type differs from Type \ref{NEtype2} in that firms $k+1,\ldots,n$
  are not ignorable: their presence is felt in the pricing decisions
  of firms $1,\ldots,k$, and we say that firms $k+1,\ldots,n$ are on
  the boundary. On the other hand, firms $n+1,\ldots,N$ are completely
  ignorable. This case arises when firms $k+1,\ldots,n$ would want to
  price above cost if they were ignored, but they do not want to price
  above cost in the full sub-game that includes them as a player.
\end{enumerate}
\renewcommand{\theenumi}{(\roman{enumi})}

In order to characterize Type \ref{NEtype3} equilibria, for any fixed $n\in\left\{1,\ldots,N\right\}$ and for any
$k=1,\ldots,n$, let $p^{b,n,n-k}$,  be the vector where for
every $i=1,\ldots,k$ we have
\begin{equation}
p_i^{b,n,n-k} = \argmax_{p\geq s_i}
D^n_i\left(p_1^{b,n,n-k},\ldots,p_{i-1}^{b,n,n-k},p,p_{i+1}^{b,n,n-k},\ldots,p_{k}^{b,n,n-k},s_{k+1},
\ldots,s_{n}\right)\cdot\left(p-s_i\right),  \label{eqn:boundaryopt}
\end{equation}
and for which $p_j^{b,n,n-k} = s_j$ for $j=k+1,\ldots,N$. This means
that firms $1,\ldots,k$ are setting prices by maximizing profit in the
sense of Nash equilibrium given that firms $k+1,\ldots,n$ are on the
boundary and firms $n+1,\ldots,N$ are ignorable. We note that this
solution is different to the solution $p^{\star,k}$ because the demand
function used in their profit maximization is $D_i^n$ and not $D^k_i$.
This is exactly what we mean by the fact that firms $k+1,\ldots,n$ are
on the boundary and hence not ignored. Explicitly, the superscript
$(b,n,n-k)$ stands for boundary, $n$ firms entering into the demand
function, and $n-k$ firms on the boundary, i.e. not ignorable.

The following lemma shows that if firm $n$ would see non-positive
demand at cost, for some fixed set of prices $p_1,\ldots,p_{n-1}$,
then, at the same fixed prices, and with firm $n$ pricing at cost,
firm $(n+1)$ will also see non-positive demand at cost.

\begin{lemma}\label{lemma:negDemandAtCost}Fix an
  $n\in\left\{1,\ldots,N-1\right\}$ and fix $p_1,\ldots,p_{n-1}$.
  Suppose $D^n_n(p_1,\ldots,p_{n-1},s_n) \leq 0$. Then
  $D^{n+1}_{n+1}\left(p_1,\ldots,p_{n-1},s_n,s_{n+1}\right) \leq 0$.
\end{lemma}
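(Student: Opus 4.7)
The plan is to translate the hypothesis, which concerns $D^n_n$, into a statement about the $(n+1)$-firm demand function $D^{n+1}_{n+1}$, and then close the argument by two applications of monotonicity.

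Let $\hat{p}_{n+1}:=\hat{p}_{n+1}(p_1,\ldots,p_{n-1},s_n)$ denote the finite choke price guaranteed by Proposition~\ref{prop:existenceOfChokePrice}, so that $D^{n+1}_{n+1}(p_1,\ldots,p_{n-1},s_n,\hat{p}_{n+1})=0$. By Definition~\ref{define:consist},
\[
 D^n_n(p_1,\ldots,p_{n-1},s_n) = D^{n+1}_n(p_1,\ldots,p_{n-1},s_n,\hat{p}_{n+1}),
\]
and Assumption~\ref{assump:sym} (inherited by $D^{n+1}$ via Proposition~\ref{prop:inherit}), used to swap the roles of firms $n$ and $n+1$, rewrites the right-hand side as $D^{n+1}_{n+1}(p_1,\ldots,p_{n-1},\hat{p}_{n+1},s_n)$. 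The hypothesis therefore becomes
\[
 D^{n+1}_{n+1}(p_1,\ldots,p_{n-1},\hat{p}_{n+1},s_n)\leq 0.
\]

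The crux of the argument, and the step I expect to be the most delicate, is to deduce from this last inequality that $\hat{p}_{n+1}\leq s_n$. I would argue by contradiction: suppose $\hat{p}_{n+1}>s_n$. Using that $D^{n+1}_{n+1}$ is strictly increasing in its $n$-th argument (Proposition~\ref{prop:inherit}) and strictly decreasing in its own $(n+1)$-th argument (Assumption~\ref{assump:downwardSlopingDemand}), the rearrangement $(\hat{p}_{n+1},s_n)\to (s_n,s_n)\to (s_n,\hat{p}_{n+1})$ of the last two arguments produces the strict chain
\[
 D^{n+1}_{n+1}(p_1,\ldots,p_{n-1},\hat{p}_{n+1},s_n)>D^{n+1}_{n+1}(p_1,\ldots,p_{n-1},s_n,s_n)>D^{n+1}_{n+1}(p_1,\ldots,p_{n-1},s_n,\hat{p}_{n+1})=0,
\]
contradicting the previous display. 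The subtlety is that both monotonicities of $D^{n+1}_{n+1}$ push its value in the same direction along this rearrangement, so the translated hypothesis cannot coexist with $\hat{p}_{n+1}>s_n$.

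To conclude, the cost ordering $s_n\leq s_{n+1}$ imposed at the start of Section~\ref{subsec:StaticGame:NE}, combined with $\hat{p}_{n+1}\leq s_n$, gives $s_{n+1}\geq \hat{p}_{n+1}$. Applying once more that $D^{n+1}_{n+1}$ is decreasing in its own price,
\[
 D^{n+1}_{n+1}(p_1,\ldots,p_{n-1},s_n,s_{n+1})\leq D^{n+1}_{n+1}(p_1,\ldots,p_{n-1},s_n,\hat{p}_{n+1})=0,
\]
which is the required conclusion.
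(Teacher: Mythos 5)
Your proof is correct and uses the same ingredients as the paper's: the consistency identity $D^n_n = D^{n+1}_n(\cdot,\hat{p}_{n+1})$, the exchangeability of firms $n$ and $n+1$, and the two monotonicity properties of $D^{n+1}_{n+1}$. The only difference is organizational — the paper splits into the cases $s_n \leq \hat{p}_{n+1}$ and $s_n \geq \hat{p}_{n+1}$ and argues each directly, whereas you first show the hypothesis rules out $\hat{p}_{n+1} > s_n$ and then apply the single remaining monotonicity step; the two arguments are essentially the same.
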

\begin{proof}
Recall that $\hat{p}_{n+1}$ is the unique price as a function of
$(p_1,\ldots,p_n)$ that equates the demand of firm $(n+1)$ to zero.
For the fixed $p_1,\ldots,p_{n-1}$ and $s_n$, we have
\begin{equation}
D^{n+1}_{n+1}\left(p_1,\ldots,p_{n-1},s_n,\hat{p}_{n+1}\right) =0.
\end{equation}
Then,
\begin{eqnarray*}
D^n_n\left(p_1,\ldots,p_{n-1},s_n\right) &=& D^{n+1}_n\left(p_1,\ldots,p_{n-1},s_n,\hat{p}_{n+1}\right)\\
&=& D^{n+1}_{n+1}\left(p_1,\ldots,p_{n-1},\hat{p}_{n+1},s_n\right) \\
&\geq& D^{n+1}_{n+1}\left(p_1,\ldots,p_{n-1},\hat{p}_{n+1},s_{n+1}\right) \\
&\geq& D^{n+1}_{n+1}\left(p_1,\ldots,p_{n-1},s_n,s_{n+1}\right),
\end{eqnarray*}
where the last inequality holds if $s_n \leq \hat{p}_{n+1}$.
Alternatively, if $s_n \geq \hat{p}_{n+1}$, we note that then we also
have $s_{n+1}\geq s_n\geq \hat{p}_{n+1}$ and thus
\begin{equation*}
D^{n+1}_{n+1}\left(p_1,\ldots,p_{n-1},s_n,s_{n+1}\right) \leq D^{n+1}_{n+1}\left(p_1,\ldots,p_{n-1},s_n,\hat{p}_{n+1}\right) = 0.
\end{equation*}
Hence, regardless of the relative size of $s_n$ and $\hat{p}_{n+1}$ we
have $D^{n+1}_{n+1}\left(p_1,\ldots,p_{n-1},s_n,s_{n+1}\right) \leq
0$.
\end{proof}

\begin{theorem}There exists a unique Nash equilibrium to the Bertrand game.
\label{thm:generalNE}
\end{theorem}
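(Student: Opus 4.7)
My plan is to prove both existence and uniqueness by an explicit constructive algorithm: I identify which of the three equilibrium types in the classification is realized, and then use the cost ordering together with Lemma \ref{lemma:negDemandAtCost} to show the identification is forced.

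\textbf{Existence.} I would start by trying Type \ref{NEtype1}: compute $p^{\star,N}$ from the $N$-player interior subgame, which exists uniquely by the standing assumption. If $D^N_i(p^{\star,N}) \geq 0$ for all $i$, then $p^{\star} = p^{\star,N}$ satisfies Definition \ref{define:NE}. Otherwise, the cost ordering together with Proposition \ref{prop:pOrderImpDOrderN} forces $D^N_N(p^{\star,N}) < 0$, and I descend: let $n^{\star}$ be the largest $n \leq N$ for which the interior $n$-player subgame solution $p^{\star,n}$ yields $D^n_i(p^{\star,n}) \geq 0$ for every $i \leq n$, with $n^{\star} = 0$ if no such $n$ exists. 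I then distinguish Types \ref{NEtype2} and \ref{NEtype3} by evaluating the threshold quantity $D^{n^{\star}+1}_{n^{\star}+1}(p^{\star,n^{\star}}, s_{n^{\star}+1})$. If it is non-positive, Lemma \ref{lemma:negDemandAtCost} propagates non-positivity to all remaining firms priced at cost, producing the Type \ref{NEtype2} candidate $(p^{\star,n^{\star}}_1,\ldots,p^{\star,n^{\star}}_{n^{\star}}, s_{n^{\star}+1},\ldots,s_N)$. If it is strictly positive, firm $n^{\star}+1$ belongs on the boundary; I pass to the boundary system (\ref{eqn:boundaryopt}) with $k = n^{\star}$ and incrementally enlarge $n$ until the resulting vector $p^{b,n,n-k}$ exhibits firms $1,\ldots,k$ strictly above cost, firms $k+1,\ldots,n$ with non-positive demand at cost (so pricing at cost is optimal since any $p > s_j$ only decreases $D^n_j$ further), and firms $n+1,\ldots,N$ ignorable by Lemma \ref{lemma:negDemandAtCost}. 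In all three cases the equilibrium conditions follow: firms above cost satisfy the first-order conditions (\ref{eq:FOC}) by construction, and firms pricing at cost earn zero profit with no profitable deviation available.

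\textbf{Uniqueness.} Suppose $p^{\star}$ and $\tilde{p}^{\star}$ are two Nash equilibria. I first argue that the set of firms pricing strictly above cost in each equilibrium must be an initial segment $\{1,\ldots,k\}$ in the cost ordering: if some $i < j$ had firm $j$ pricing above cost and firm $i$ at cost, Assumption \ref{assump:sym} combined with $s_i \leq s_j$ would allow firm $i$ to earn at least as much profit by mimicking firm $j$'s markup, contradicting the optimality condition at $p_i^{\star}=s_i$. Given the common initial segment $\{1,\ldots,k\}$, the boundary set $\{k+1,\ldots,n\}$ is determined by sequentially testing non-positivity of demand at cost, which by Lemma \ref{lemma:negDemandAtCost} is monotone in the index. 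The price values themselves are then pinned down by the uniqueness of the interior and boundary fixed-point problems (the assumption following Definition \ref{define:NE}, and the uniqueness of (\ref{eqn:boundaryopt}) under the same diagonal-dominance considerations as (\ref{eq:diagDomCond})). Hence $p^{\star} = \tilde{p}^{\star}$.

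\textbf{Main obstacle.} The hardest step will be the Type \ref{NEtype3} construction: verifying that the iteration in $n$ terminates with a bona fide Nash equilibrium. Two things must be checked. First, that no boundary firm $j \in \{k+1,\ldots,n\}$ profits from deviating upward; this should follow from $D^n_j \leq 0$ at cost together with $\partial D^n_j/\partial p_j < 0$ from Assumption \ref{assump:downwardSlopingDemand}. Second, that the boundary fixed-point system (\ref{eqn:boundaryopt}) actually admits a solution in which $p_i^{b,n,n-k} > s_i$ for every $i \leq k$, even as extra boundary firms are added. This second point is the delicate one, because adding a boundary firm at its cost $s_j$ raises the demand faced by the above-cost firms via $\partial D^n_i/\partial p_j > 0$, and one needs a monotonicity/continuity argument in the iteration to guarantee that the strict-above-cost condition for $i \leq k$ is preserved until the correct boundary set is located.
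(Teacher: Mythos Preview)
Your overall constructive strategy matches the paper's, and your handling of Types \ref{NEtype1} and \ref{NEtype2} is essentially the same. The gap is in the Type \ref{NEtype3} construction, where your iteration runs in the wrong direction.

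Once you have identified $n^{\star}$ and found that $D^{n^{\star}+1}_{n^{\star}+1}(p^{\star,n^{\star}},s_{n^{\star}+1})>0$ while $p_{n^{\star}+1}^{\star,n^{\star}+1}\le s_{n^{\star}+1}$, you propose to \emph{fix} $k=n^{\star}$ and \emph{enlarge} $n$ by adding higher-cost firms $n^{\star}+2,n^{\star}+3,\ldots$ to the boundary. The paper does the opposite: the demand level is \emph{fixed} at $n^{\star}+1$ (firms $n^{\star}+2,\ldots,N$ remain ignorable by Lemma \ref{lemma:negDemandAtCost}), and it is $k$ that is \emph{decreased}. The reason is that passing from the $n^{\star}$-firm demand $D^{n^{\star}}$ to the $(n^{\star}+1)$-firm demand $D^{n^{\star}+1}$, with the extra competitor priced at $s_{n^{\star}+1}$, typically pushes the re-optimized prices of firms $1,\ldots,n^{\star}$ \emph{down}; some of these may now fall below cost, and those firms must be moved onto the boundary. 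Your iteration never touches $k$, so it cannot detect or correct this. Concretely, if $p_{n^{\star}}^{b,n^{\star}+1,1}<s_{n^{\star}}$, your procedure has nowhere to go (enlarging $n$ past $n^{\star}+1$ only adds more competition and cannot restore $p^{b}_{n^{\star}}>s_{n^{\star}}$), whereas the paper's procedure moves firm $n^{\star}$ onto the boundary and re-solves with $k=n^{\star}-1$.

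Your ``main obstacle'' paragraph misdiagnoses the difficulty along the same lines. You write that adding a boundary firm at its cost \emph{raises} demand for the above-cost firms via $\partial D_i^n/\partial p_j>0$; but adding a boundary firm does not merely add a term to $D_i^n$, it replaces $D_i^n$ by $D_i^{n+1}$ evaluated with $p_{n+1}=s_{n+1}$. By Definition \ref{define:consist}, $D_i^n(\cdot)=D_i^{n+1}(\cdot,\hat p_{n+1})$, and since $s_{n+1}<\hat p_{n+1}$ in the boundary case, the switch actually \emph{lowers} each above-cost firm's demand, which is precisely why $k$ may need to shrink. Separately, your uniqueness argument via ``mimicking firm $j$'s markup'' is incomplete: a unilateral deviation by firm $i$ leaves all other prices fixed, so exchangeability alone does not let firm $i$ replicate firm $j$'s profit. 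The paper obtains uniqueness not by a separate argument but from the fact that each step of the construction is forced by the unique subgame and boundary fixed points.
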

\begin{proof}
We begin with the lowest cost firm. His equilibrium candidate price is
given by $p_1^{\star,1}$. If $p_1^{\star,1}$ is less than or equal to
$s_1$ then the optimal response of firm 1 is to set price equal to
cost.
By Lemma \ref{lemma:negDemandAtCost}, every other firm has
negative demand at cost. Hence, it is the best response of all firms
to set price at cost. In this case, costs are so high that no firms receive demand in
equilibrium, and we have
\begin{equation}
p^{\star} = \left(s_1,\ldots,s_N\right),
\end{equation}
which is of Type \ref{NEtype2} with $n=0$.
Alternatively, if $p_1^{\star,1} > s_1$, then additional firms may also want to price above cost.

Suppose that for some $n\geq 1$ we have $p_i^{\star,n} > s_i$ for all
$i=1,\ldots,n$. Consider the pricing decision of firm $n+1$. We find
that if
\begin{equation}
D_{n+1}^{n+1}\left(p_1^{\star,n},\ldots,p_n^{\star,n},s_{n+1}\right) \leq 0,
\end{equation}
then firm $(n+1)$ will not want to price above cost because even at
cost they do not receive demand. Furthermore, by Lemma
\ref{lemma:negDemandAtCost}, firms $n+2,\ldots, N$ will also not
receive demand and their best response will thus be to set price at
cost. Hence, we have a Type \ref{NEtype2} equilibrium given by
\begin{equation*}
p^{\star} = \left(p_1^{\star,n},\ldots,p_n^{\star,n},s_{n+1},\ldots,s_N\right).
\end{equation*}
However, if
\begin{equation}\label{firmNp1EntryCheck}
D_{n+1}^{n+1}\left(p_1^{\star,n},\ldots,p_n^{\star,n},s_{n+1}\right) > 0,
\end{equation}
then firm $n+1$ may want to price above cost. We must then distinguish
two cases. The first is where $p_{n+1}^{\star,n+1} > s_{n+1}$. Then
firm $(n+1)$ will want to price according to the interior candidate
price, and all firms with lower cost will also price
at their $(n+1)$-firm interior candidate prices. At this point, we have to consider
the entry decision of the next firm, thereby moving back to the
beginning of this inductive step if $n+1<N$. However, if $n+1 = N$
then we stop and we have a Type \ref{NEtype1} equilibrium given by
\begin{equation*}
p^{\star} = \left(p_1^{\star,N},\ldots,p_N^{\star,N}\right).
\end{equation*}

The second case is where $p_{n+1}^{\star,n+1} \leq s_{n+1}$. Here, by
\eqnref{firmNp1EntryCheck}, firm $(n+1)$ wants to price above cost
when the first $n$ firms are pricing at their interior candidate prices
in the $n$-firm game. But, its cost is too high to receive any demand
at its $(n+1)$-firm candidate price. We say that this firm is on the
boundary. Therefore, firm $(n+1)$ must set price equal to $s_{n+1}$,
because if they were to price strictly above cost it would have to be
an interior candidate price, and we already have seen that this is
not possible for the given $s_{n+1}$. We have thus ruled out both Type
\ref{NEtype1} and Type \ref{NEtype2} equilibria, and the equilibrium of
this game is of Type \ref{NEtype3}.

Hence, the remaining firms solve for an equilibrium with the
$(n+1)$-firm
demand functions, but with $p_{n+1}$ fixed at $s_{n+1}$. This
will result in the prices $p_i^{b,n+1,1}$ for firms $i=1,\ldots,n$. If
$p_n^{b,n+1,1} \geq s_n$, then we can stop and we have
\begin{equation}
p^{\star} = \left(p_1^{b,n+1,1},\ldots,p_n^{b,n+1,1},s_{n+1},\ldots,s_N\right),
\end{equation}
again where we know that all firms with cost greater than firm $(n+1)$
price at cost by Lemma \ref{lemma:negDemandAtCost}.
However, suppose to the contrary that $p_n^{b,n+1,1} < s_n$. Then
$s_n$ is too high to sustain a boundary solution with player $n$ pricing above cost, and we must consider the situation where there is more than one firm on
the boundary. We find $k\in\left\{0,\ldots,n\right\}$ such that
\begin{equation}
p_{k+1}^{b,n+1,n-k} < s_{k+1} \hspace{0.5cm}\textrm{ and
}\hspace{0.5cm}p_k^{b,n+1,n-k+1} \geq s_k. \label{eq:kcond}
\end{equation}
Here $k$ represents the number of firms setting price according to the
boundary optimization \eqnref{eqn:boundaryopt}, and $n-k+1$ is the number of firms on
the boundary.
From \eqnref{eq:kcond}, the best response of each of the
$n-k+1$ boundary firms is to price at cost and, by Lemma
\ref{lemma:negDemandAtCost}, for the remaining higher cost firms to
also price at cost. Meanwhile firms $1,\ldots,k$ choose
prices $p_i^{b,n+1,n-k+1}$ which are greater than their costs.
Thus, we have
\begin{equation}
p^{\star} = \left(p_1^{b,n+1,n-k+1},\ldots,p_k^{b,n+1,n-k+1},s_{k+1},\ldots,s_n,s_{n+1},\ldots,s_N\right).
\end{equation}
\end{proof}
\subsubsection{Nash Equilibrium with Linear Demand}
We give explicit expressions for the Nash equilibrium to the Bertrand
game under the linear demand functions discussed in Section \ref{sec:linDemandDef}.

\begin{prop}\label{prop:linearNE}
There exists a unique equilibrium to the Bertrand game with linear
demand. The type \ref{NEtype1} and \ref{NEtype2} candidate solutions
are given by
\begin{equation}\label{eq:propCandidates}
p_i^{\star,n} = \frac{1}{\left(2b_n+c_n\right)}\left[a_n + c_n \frac{na_n + b_n\sum_{m=1}^ns_m}{\left(2b_n - (n-1)c_n\right)} + b_ns_i\right].
\end{equation}
The type \ref{NEtype3} candidate solutions are given by
\begin{eqnarray}
p_i^{b,n+1,n+1-k} &=& \frac{1}{\left(2b_{n+1}+c_{n+1}\right)}\left[\left(a_{n+1}+c_{n+1}\sum_{m=k+1}^{n+1}s_{m}\right) \nonumber\right. \\
& & \left.
  +c_{n+1}\left(\frac{n\left(a_{n+1}+c_{n+1}\sum_{m=k+1}^{n+1}s_{m}\right)
      + b_{n+1}\sum_{m=1}^k s_m}{2b_{n+1} - (k-1)c_{n+1}}\right) +~
  b_{n+1}s_i\right].\label{eq:propBoundaryEqPricesK}
\end{eqnarray}
The Nash equilibrium is constructed as follows:
\begin{itemize}
\item If $s_1 > \frac{a_1}{b_1}$, then $p^{\star} = \left(s_1,\ldots,s_N\right)$.
\item Else, find $n$ such that $p_i^{\star,n} > s_i, \forall i=1,\ldots,n,$ and $p_{n+1}^{\star,n+1}\leq s_{n+1}$.
    \begin{itemize}
    \item If $s_{n+1} \geq b_{n+1}^{-1}\left(a_{n+1} +
          c_{n+1}\sum_{i=1}^np_i^{\star,n}\right)$, then $p^{\star} = \left(p_1^{\star,n},\ldots,p_n^{\star,n},s_{n+1},\ldots,s_N\right)$,
    \item Else,
\begin{itemize}
\item[$\circ$] if $p_n^{b,n+1,1} > s_n$, then $p^{\star} =
  \left(p_1^{b,n+1,1},\ldots,p_n^{b,n+1,1},s_{n+1},\ldots,s_N\right)$,
    \item[$\circ$] else, find $k<n$ such that
\[ p_i^{b,n+1,n+1-k} > s_i\,\mbox{ for all } i=1,\ldots,k, \quad \mbox{and}
\quad p_{k+1}^{b,n+1,n+1-(k+1)}<s_{k+1}. \]
            Then $p^{\star} =
                        \left(p_1^{b,n+1,n+1-k},\ldots,p_k^{b,n+1,n+1-k},s_{k+1},\ldots,s_N\right)$.
    \end{itemize}
        \end{itemize}
\end{itemize}
\end{prop}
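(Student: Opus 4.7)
The plan is to split the proposition into three pieces: (i) derive the explicit candidate price families \eqnref{eq:propCandidates} and \eqnref{eq:propBoundaryEqPricesK} by solving the underlying linear FOC systems; (ii) verify the subgame Assumption underlying Theorem~\ref{thm:generalNE} for the linear demand family, which is what gives uniqueness at each stage; and (iii) read off the nested construction algorithm as the direct specialization of Theorem~\ref{thm:generalNE} to linear demand. Existence and uniqueness of the overall equilibrium then follow from Theorem~\ref{thm:generalNE} itself.

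For (i), I would start with the $n$-firm interior subgame. Differentiating $\Pi_i^n = (a_n - b_n p_i + c_n \sum_{j\neq i,\,j\leq n} p_j)(p_i - s_i)$ in $p_i$ gives the FOC
\begin{equation*}
-2 b_n p_i^{\star,n} + c_n \sum_{j\neq i,\, j\leq n} p_j^{\star,n} + a_n + b_n s_i = 0,\qquad i=1,\ldots,n.
\end{equation*}
Summing over $i$ yields $(2b_n - (n-1)c_n)\sum_i p_i^{\star,n} = n a_n + b_n \sum_i s_i$, and substituting that aggregate back into each per-firm FOC isolates $p_i^{\star,n}$ in the form \eqnref{eq:propCandidates}. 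For the boundary formula \eqnref{eq:propBoundaryEqPricesK}, I would run the same two-step calculation on $D_i^{n+1}$ with $p_{k+1},\ldots,p_{n+1}$ frozen at cost, so that $c_{n+1}\sum_{m=k+1}^{n+1}s_m$ is absorbed as an additive shift of the effective intercept and the problem becomes a $k$-firm system with coefficients $b_{n+1},c_{n+1}$. The main bookkeeping hazard, which I expect to be the only real obstacle, is keeping the demand-level index $n+1$ cleanly distinct from the active-firm count $k$, so that the summation ranges and the denominator $2b_{n+1}-(k-1)c_{n+1}$ are tracked correctly.

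For (ii), strict concavity of $\Pi_i^n$ in $p_i$ is clear since the quadratic coefficient equals $-2b_n<0$. The diagonal-dominance condition \eqnref{eq:diagDomCond} collapses to $2b_n > (n-1)c_n$, because $|\partial^2\Pi_i^n/\partial p_i\partial p_j|=c_n$ for $j\neq i$; this is the same positivity used to invert the linear system in step (i). Using Proposition~\ref{prop:recursionSoln} I would write
\begin{equation*}
2b_n - (n-1)c_n \;=\; \frac{2\beta + (n-3)\gamma}{(\beta+(n-1)\gamma)(\beta-\gamma)},
\end{equation*}
and conclude positivity from $\beta>\gamma>0$, handling the small cases $n=1,2$ by direct inspection. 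An analogous check with $k$ in place of $n$ and coefficients $b_{n+1},c_{n+1}$ in place of $b_n,c_n$ covers the boundary subgames behind \eqnref{eq:propBoundaryEqPricesK}.

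For (iii), I would translate each branch of Theorem~\ref{thm:generalNE} into the linear language. The initial condition $p_1^{\star,1}\leq s_1$ reduces via the $n=1$ specialization of \eqnref{eq:propCandidates} to $s_1\geq a_1/b_1$; the entry-rejection test $D_{n+1}^{n+1}(p_1^{\star,n},\ldots,p_n^{\star,n},s_{n+1})\leq 0$ becomes $s_{n+1}\geq b_{n+1}^{-1}(a_{n+1}+c_{n+1}\sum_i p_i^{\star,n})$ by direct substitution into \eqnref{eq:dnConsistDefine}; and the cascading search for the number of boundary firms in the proof of Theorem~\ref{thm:generalNE} is precisely the nested conditional appearing in the proposition, now instantiated by \eqnref{eq:propBoundaryEqPricesK}. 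Once (i) and (ii) are established, no further work beyond this transcription is needed.
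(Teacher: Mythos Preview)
Your proposal is correct and follows essentially the same route as the paper: derive the best-response FOCs, sum to obtain the aggregate price, substitute back to isolate each $p_i^{\star,n}$, note that the second-order and diagonal-dominance conditions reduce to $2b_n>(n-1)c_n$ (which the paper also records follows from \eqnref{ABCintermsOfOthers} and $\beta>\gamma$), handle the boundary candidates analogously, and then transcribe the construction from Theorem~\ref{thm:generalNE}. Your treatment is somewhat more explicit than the paper's---in particular, you write out $2b_n-(n-1)c_n$ in terms of $\beta,\gamma$ and describe the intercept-shift trick for the boundary subgame---but there is no substantive difference in method.
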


\begin{proof}
  We first show that the first-order condition equation,
  \eqnref{eq:FOC}, has a unique solution. The second-order conditions
  that these are maxima for each player are satisfied as a straightforward
  consequence of $b_n >0$. In order to find a formula
  for $p_i^{\star,n}$, we first solve the unconstrained individual
  firm profit maximization problem to get the best-response function
  for each firm. This results in
\begin{equation}\label{bestRespi}
p_i^{\star,n} = \frac{1}{2}\left(\frac{a_n}{b_n} + \frac{c_n}{b_n}\sum_{j\neq i}p_j^{\star,n} + s_i\right).
\end{equation}
In order to find the intersection of all these functions, we sum
\eqnref{bestRespi} over $i$ to obtain
\begin{equation}\label{sumOfPiStar}
\bar{p}^{\star,n} = \frac{na_n + b_n\bar{s}_n}{\left(2b_n - (n-1)c_n\right)},
\end{equation}
where $\bar{s}_n = \sum_{j=1}^n s_j$, the sum of the first $n$
firms' costs, and $\bar{p}^{\star,n} = \sum_{j=1}^n p_i^{\star,n}$.
Rewriting \eqnref{bestRespi} in terms of $\bar{p}^{\star,n}$ gives
\begin{equation}\label{theCandidatePrice}
p_i^{\star,n} = \frac{1}{\left(2b_n+c_n\right)}\left[a_n + c_n\bar{p}^{\star,n} + b_ns_i\right].
\end{equation}
Thus, our candidate solution $p_i^{\star,n}$, found by solving \eqnref{eq:FOC}, is given by
\eqnref{theCandidatePrice} for $i=1,\ldots,n$ and $p_i^{\star,n} =
s_i$ for $i=n+1,\ldots,N$. This establishes the formulas in
\eqnref{eq:propCandidates}. The $p_i^{\star,n}$ are necessarily
positive because $2b_n > (n-1)c_n$ which follows easily from \eqnref{ABCintermsOfOthers}
and our standing assumption that $B > (N-1)C$ (which is equivalent to
$\beta > \gamma$). Therefore we have a unique Nash equilibrium of the
subgame with positive prices.

The boundary formulas
\eqnref{eq:propBoundaryEqPricesK} are established similarly, and the remainder of the proposition follows
from the proof of Theorem \ref{thm:generalNE}.
\end{proof}

\begin{remark}
The condition for positive prices in the proof, namely $2b_n > (n-1)c_n$, is
exactly the diagonally dominant condition of \eqnref{eq:diagDomCond}.
\end{remark}

\subsection{Discussion of the Static Game}\label{sec:linDuopExample_Equil}
The boundary type of solution we discuss above does not appear to
exist in the literature on Bertrand games, which has primarily
focused on cases where Type \ref{NEtype1} equilibria occur. For
example in the typically-studied case where firms are taken to have
equal costs, the boundary would not exist because all firms would
either price at cost, or they would all play an interior Nash
equilibrium. However, this boundary type of solution may occur when
firms have asymmetric costs. Consider a firm who prices strictly
above cost in a boundary equilibrium. One can think of this firm as
using price to discourage competition from other smaller or less
efficient firms. In the simplest case of two players, a potential
monopolist sets a price below the optimal monopoly price in order to
discourage the entry of a possible competitor. Such practices are
usually termed predatory pricing.

For further discussion, we illustrate with the linear duopoly. We assume we have linear
demand functions in the sense of Section \ref{sec:linDemandDef} with $N=2$ for
fixed constants $A,B$ and $C$, with $B>C$. We use the result of Proposition
\ref{prop:recursionSoln} to re-parameterize the problem in terms of
constants $\alpha,\beta$ and $\gamma$. For a fixed cost $s$, the
optimal price and realized demand in the monopoly problem are
given by:
\begin{equation}\label{eq:monopPriceAndDemand}
p_M^{\star}(s) = \frac{1}{2}\left(\alpha + s\right), \qquad
D_M^{\star}(s) = \frac{1}{2\beta}\left(\alpha - s\right).
\end{equation}
In terms of the players' costs $(s_1,s_2)$,
the interior equilibrium duopoly prices and demands are given by
\begin{eqnarray}
p_i^{\star,2}(s_1,s_2) &=& \alpha\left(\frac{\beta-\gamma}{2\beta -
    \gamma}\right) + \frac{\beta}{(4\beta^2-\gamma^2)}\left(2\beta s_i +
  \gamma s_j\right), \qquad i=1,2; j\neq i, \label{eqn:pstar2}\\
D_i^{\star}(s_1,s_2) &=& \frac{\alpha}{\beta + \gamma} -
\frac{\beta}{(\beta^2 - \gamma^2)}p_i^{\star}(s_1,s_2) +
\frac{\gamma}{(\beta^2 - \gamma^2)}p_j^{\star}(s_1,s_2). \label{eqn:Dstar2}
\end{eqnarray}
Finally, if the boundary case arises, the equilibrium prices are given by
\begin{equation}\label{eq:duopBoundaryPrice}
p^{b,2,1}_i(s_1,s_2) =
\frac{1}{2}\left(\frac{\alpha(\beta-\gamma)+\gamma s_j}{\beta} +
  s_i\right), \qquad\mbox{and}\qquad p_j^{\star} = s_j,
\end{equation}
which can be found from the two-player profit maximization problem
under the assumption that one's opponent sets price equal to cost. If
$i$ denotes the lower cost firm such that $0\leq s_i\leq s_j$, then
the Nash equilibrium price strategies are given by
\begin{equation}\label{eq:staticSoln1}
p_i^{\star} = \max\left(s_i, \left\{\begin{array}{ccc} p_M^{\star}(s_i) & \textrm{ if } & D_j(p_M^{\star}(s_i),s_j) \leq 0 \\
p_i^{b,2,1}(s_1,s_2) & \textrm{ if } & D_j(p_M^{\star}(s_i),s_j)  > 0 \textrm{ and } p_j^{\star,2} \leq s_j \\
p_i^{\star,2} & \textrm{ else} & \end{array}\right.\right),
\end{equation}
and $p_j^{\star} = \max\left(s_j,p_j^{\star,2}\right)$. In the case
$s_1 = s_2 = s$, this simplifies to $p_1^{\star} = p_2^{\star} =
\max\left(s,\frac{\alpha(\beta-\gamma)+\beta
    s}{2\beta-\gamma}\right)$.

We examine the above solutions in more detail. Let us first note that
if $p_1^{\star,2} < s_1$, then a duopoly is not sustainable. This
occurs if and only if
\begin{equation}
\phi^1(s_1) \triangleq \left(\frac{2\beta^2-\gamma^2}{\beta\gamma}\right)s_1 - \frac{\alpha}{\beta\gamma}\left(\beta-\gamma\right)\left(2\beta+\gamma\right) > s_2.
\end{equation}
Similarly, we note that if
$D_1^{\star}\left(s_1,p_M^{\star}(s_2)\right) < 0$, then Firm 2 has a
monopoly. This occurs if and only if
\begin{equation}
\phi^2(s_1) \triangleq \frac{2\beta}{\gamma}s_1 -
\frac{\alpha}{\gamma}\left(2\beta - \gamma\right) > s_2.
\end{equation}
Therefore, for $s_2 \in \left(\phi^2(s_1), \phi^1(s_1)\right)$, Firm
2 cannot sustain a monopoly, but neither is a duopoly sustainable.
This is the situation where Firm 1 is on the boundary. By the
symmetry of the game, we can also use $\phi^1$ and $\phi^2$ to
characterize where Firm 2 is on the boundary, and where Firm 1 has a
monopoly. We can fully characterize the type of game in the space of
$(s_1,s_2)$ through these two functions. First, note $\phi^1(\alpha)
= \phi^2(\alpha) = \alpha,$ and $\phi^1(s) - \phi^2(s) =
\frac{\gamma}{\beta}\left(\alpha - s\right)$.  We see in Figure
\ref{duopSolnCharacterization} that as long as $|s_1 - s_2|$ is
relatively small, a duopoly will be sustainable. We also note that
the dependence of $\phi^1(s) - \phi^2(s)$ on $\gamma$ implies that
as $\gamma$ decreases, the size of the boundary area will also
decrease.
\begin{figure}[htb]
\begin{center}
\setlength{\unitlength}{0.9cm}
\begin{picture}(9,9)(0,0)
\put(0,0){\vector(0,1){7}}
\put(-0.5,6.5){$s_2$}
\put(0,0){\vector(1,0){7}}
\put(6.5,-0.5){$s_1$}
\multiput(6,-0.2)(0,0.5){13}{\line(0,1){0.3}}
\put(5.9,-0.5){$\alpha$}
\multiput(-0.2,6)(0.5,0){13}{\line(1,0){0.3}}
\put(-0.6,5.9){$\alpha$}

\put(3,0){\line(1,2){4}}
\multiput(4.5,0)(0.2,0.8){10}{\line(1,4){0.1}}

\put(0,3){\line(2,1){8}}
\multiput(0,4.5)(.8,0.2){10}{\line(4,1){0.5}}

\put(5.2,1.5){$M_2$}
\put(4,1.5){$B_2$}
\put(1,2){$Duopoly$}
\put(1,4){$B_1$}
\put(0.9,5.3){$M_1$}

\put(7,7.5){$\phi^1(s_1)$}
\put(5,7.3){$\phi^2(s_1)$}

\put(8,6.8){$\phi^1(s_2)$}
\put(7.2,5.8){$\phi^2(s_2)$}

\put(6.5,4){$M_i = $ Firm $i$ Monopoly}
\put(6.5,3){$B_i = $ Firm $j$ on Boundary}
\end{picture}
\end{center}
\caption{Characterization of solution in cost space}\label{duopSolnCharacterization}
\end{figure}
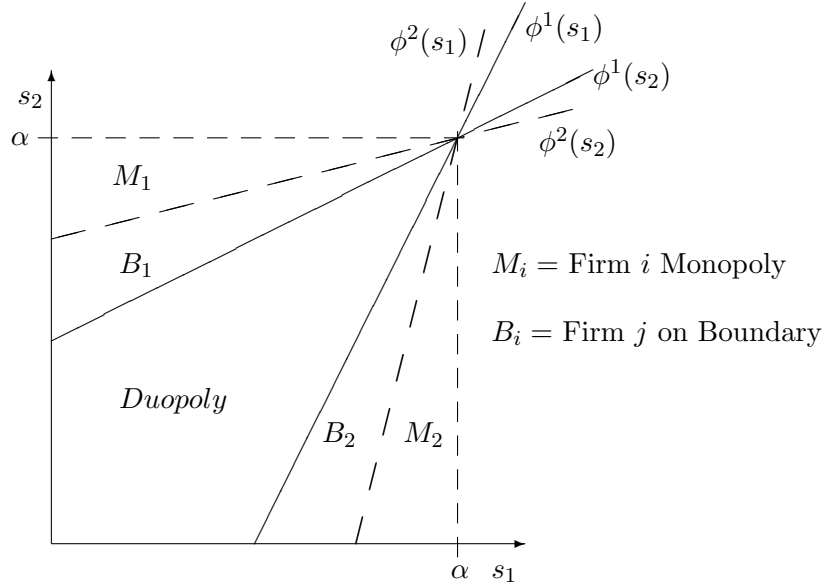
Consider a fixed set of costs $(s_1,s_2)$ such that $s_1>s_2$ and $s_2 = \phi^2(s_1)$. This is the point where the game transitions from the boundary solution to the monopoly solution. Using the monopoly price and boundary price given above, we find at this transition point that
\begin{equation*}
p_M^{\star}\left(\phi^2(s_1)\right) - p_2^{b,2,1}\left(s_1,\phi^2(s_1)\right) = \frac{\gamma}{2\beta}\left(\alpha - s_1\right)>0,
\end{equation*}
and thus there is a jump in the equilibrium price of Firm 2 as the game transitions from Firm 1 on the boundary to Firm 2 being a monopoly.

\section{Differential Game}\label{sec:diffGame}
The single-period game provides only the beginning of an insight into the pricing decisions of firms. In reality, firms make their decisions dynamically through time.
We consider a market in which there are $N$ possible firms, each of which has a fixed lifetime capacity of production at time $t=0$ denoted by $x_i(0)$, and where $x_i(t)$ denotes the remaining capacity at time $t$. The firms in this market produce only a single good and thus, one can use the terminology firm and product interchangably. In this sense, $x_i(t)$ represents the remaining amount of a certain product that will be sold. Therefore, when $x_i=0$, no more of the product will be sold, the firm has exhausted its capacity and is out of business. Thus, $x_i$ is not to be confused with inventory which is typically replenishable. Our point of view abstracts from the microscopic level of inventory fluctuations to the level of lifetime production. For simplicity of notation, we consider the cost of production in the dynamic game to be zero, but we will see there are shadow costs associated with scarcity of goods as they run down.

Each firm $i$ chooses a Markovian dynamic pricing strategy, $p_i = p_i(\mathbf{x}(t))$ where $\mathbf{x}(t) = \left(x_1(t),\ldots,x_N(t)\right)$. This is the price at which consumers can purchase a unit of the good produced by firm $i$. The firms in this market produce substitute, but not perfectly substitutable, goods. As in Section \ref{sec:StaticGame}, given these prices, each firm $i=1,\ldots,N$ expects the market to demand at a rate $D_i(p_1,p_2,\ldots,p_N)$, but actual demands from the market may see short term unpredictable fluctuations.
We model them in the simplest way:
\begin{equation}\label{eq:dynActualDemand}
d_i(t) = D_i\left(p_1,p_2,\ldots,p_N\right) - \sigma_i\dot{\epsilon}_i(t),
\end{equation}
where $\left\{\dot{\epsilon}_i(t)\right\}_{i=1,\ldots,N}$ are correlated Gaussian white noise sequences. Consequently, the dynamics of the lifetime capacity of the firms is given by $dx_i(t) = -d_i(t)~dt$. This leads to the controlled stochastic differential equations
\begin{equation}\label{eq:inventoryDynamics}
dx_i(t) = -D_i\big(p_1(\mathbf{x}(t)),\ldots,p_N(\mathbf{x}(t))\big)
dt + \sigma_i~dW_i(t), \hspace{0.1cm}\textrm{ if } x_i > 0, ~~
i=1,\ldots,N,
\end{equation}
where $\{W_i(t)\}_{i=1,\ldots,N}$ are correlated Brownian motions. If
$x_i(t) = 0$ for any $t$, then $x_i(s) = 0$ for all $s \geq t$: random
shocks cannot resuscitate a firm that has gone out of business.
The use of this type of additive shock in demand is common in the
economics literature, for example \citet{arrow51}. The use of a
Brownian motion for the demand flow can be found in various sources,
for example \citet{bather66}. An alternative model for demand
uncertainty in the literature is to consider the process of customer
arrival as a Poisson process, see for example \citet{zeevi09} for
recent work in this direction.

\subsection{The Linear Demand Duopoly Game}

Now that we have fully specified the dynamics of the firms' remaining
lifetime capacities, we move on to the actual study of the dynamic
game. The analysis can be done for an arbitrary number of players $N$,
but, to simplify the exposition, we focus on the case $N=2$,
i.e. duopoly. Additionally, we focus on the case of a linear demand
system. This will allow us to be more explicit in our actual results. See Appendix \ref{append:sec:NplayerPDE} for a discussion of the
$N$-player linear demand game.

Given initial lifetime capacity $x_i(0) > 0$, player $i=1,2$ seeks to maximize
his expected discounted lifetime profit
\begin{equation}
\expec \left\{\int_0^{\infty}
  e^{-rt}p_i(x(t))D_i\left(p_1\left(x(t)\right),p_2\left(x(t)\right)\right)\indicator_{\left\{x_i(t)
      > 0\right\}}~dt\right\},
\end{equation}
where $r>0$ is a discount rate and $D_i$ are the actual demands
constructed in Definition \ref{actdem} using the linear demand functions
given in Section \ref{sec:linDemandDef}. We restrict attention to
Markov Perfect Nash equilibria in order to rule out equilibria with
undesirable properties such as non-credible threats (see, for example,
\citet[Chapter 13]{fudenberg91}). This means that we are looking for a
pair
$\left(\bar p_1^{\star}(\mathbf{x}(t)), \bar p_2^{\star}(\mathbf{x}(t))\right)$
such that for $i=1,2$, $j\neq i$, and for all
$\mathbf{x}(0)\in\mathbb{R}^2_+ $,
\begin{eqnarray*}
  \expec\left\{ \int_0^{\infty}
    e^{-rt}\bar p^{\star}_i(x(t))D_i\left(\bar
      p_i^{\star}\left(\mathbf{x}(t)\right),\bar
      p^{\star}_j\left(\mathbf{x}(t)\right)\right)\indicator_{\left\{x_i(t)
        > 0\right\}}~dt \right\} \geq \\
\expec\left\{ \int_0^{\infty}
    e^{-rt}p_i(x(t))D_i\left(p_i\left(\mathbf{x}(t)\right), \bar p^{\star}_j
      \left(\mathbf{x}(t)\right)\right)\indicator_{\left\{x_i(t)
        > 0\right\}}~dt\right\},
\end{eqnarray*}
for any Markov strategy $p_i$ of player $i$. (The overbar in $\bar
p_i^{\star}$  is used to distinguish the dynamic Nash equilibrium from
the equilibrium of the static game in Section \ref{sec:StaticGame}).

We define the value functions of the two firms by the coupled
optimization problems
\begin{equation}
  V_i(x_1,x_2) = \sup_{p_i\geq 0} \expec\left\{\int_0^{\infty}
    e^{-rt}p_i(\mathbf{x}(t))D_i\left(p_1\left(\mathbf{x}(t)\right),p_2
      \left(\mathbf{x}(t)\right)\right)\indicator_{\left\{x_1(t)
        > 0\right\}}~dt\right\}, \qquad i=1,2. \label{eq:Vi}
\end{equation}
Then, by a dynamic programming argument for nonzero-sum differential
games (see, for example, \citet[Section 8.2]{friedman06},
\citet[Section 6.5.2]{basar95}, or \citet[Section 4.2]{dockner00}),
these value functions, if they have sufficient regularity, satisfy the
following system of PDEs:
\begin{equation}\label{eq:PDEsystem1}
\mathcal{L}V_i + \sup_{p_i\geq 0}\left\{ -
  D_1\left(p_1,p_2\right)\pderiv{V_i}{x_1} -
  D_2\left(p_1,p_2\right)\pderiv{V_i}{x_2} +
  p_iD_i\left(p_1,p_2\right)\right\} -rV_i = 0
\end{equation}
for $i=1,2$, where
\begin{equation*}
  \mathcal{L} = \frac{1}{2}\sigma_1^2\pderivs{}{x_1} +
  \rho\sigma_1\sigma_2\pcderivs{}{x_1}{x_2} +
  \frac{1}{2}\sigma^2_2\pderivs{}{x_2},
\end{equation*}
and $\rho$ is the correlation coefficient of the Brownian motions:
$\expec\left\{dW_1 dW_2\right\} = \rho\, dt$.

When the parameter $\gamma$ is not too large, both players are close
to being monopolists in disjoint markets for their own goods, and we
can expect that the dynamic Nash equilibrium $\left(\bar
  p_1^{\star}(\mathbf{x}(t)), \bar p_2^{\star}(\mathbf{x}(t))\right)$
is such that both demands $D_i\left(\bar p_1^{\star}(\mathbf{x}(t)),
  \bar p_2^{\star}(\mathbf{x}(t))\right)$ remain strictly positive
while $x_i(t)>0$.  We shall
find that this is indeed the case for small enough $\gamma$ in the
asymptotic solution of Section \ref{asymp} and the numerical solutions in
Section \ref{sec:numericalAnalysis}.

When both demands are positive, we see easily from Propositions
\ref{prop:linDemand} and \ref{prop:recursionSoln} that the linear
demand functions satisfy the relationship
\begin{equation}\label{eq:demandDecomp}
D_j(p_1,p_2) = D_M(p_j) - \frac{\gamma}{\beta}D_i(p_1,p_2), \qquad
j\neq i,
\end{equation}
which allows us to re-write \eqnref{eq:PDEsystem1} as
\begin{equation}\label{eq:PDEsystem1a}
\mathcal{L}V_i - D_M(p_j)\pderiv{V_i}{x_j} + \sup_{p_i\geq 0}\left\{
  D_i\left(p_1,p_2\right)\cdot\left[p_i - \left(\pderiv{V_i}{x_i}
      -\frac{\gamma}{\beta}\pderiv{V_i}{x_j}\right)\right] \right\}
-rV_i = 0, \qquad i=1,2.
\end{equation}
We now observe that the Nash equilibrium problem in the two PDEs in
\eqnref{eq:PDEsystem1a} is exactly a static Nash equilibrium problem
for a two-player Bertand game, but with costs
\begin{equation}\label{eq:theShadowCosts}
S_i\left(\mathbf{x}\right)\triangleq
\pderiv{V_i}{x_i}\left(\mathbf{x}\right) -
\frac{\gamma}{\beta}\pderiv{V_i}{x_j}\left(\mathbf{x}\right), \qquad i=1,2.
\end{equation}

Given the unique Nash equilibrium
$p_i^\star\left(S_1\left(\mathbf{x}\right),S_2\left(\mathbf{x}\right)\right)$
of this static problem from Proposition \ref{prop:linearNE}, the PDE
system is simply
\begin{equation}\label{eq:PDEsystem2}
\mathcal{L}V_i -
D_M\left(p_j^{\star}\left(S_1\left(\mathbf{x}\right),S_2\left(\mathbf{x}\right)\right)\right)\pderiv{V_i}{x_j} +
G_i\left(S_1\left(\mathbf{x}\right),S_2\left(\mathbf{x}\right)\right)
- rV_i = 0,  \hspace{0.5cm} i,j=1,2; j\neq i,
\end{equation}
where we define $G_i(s_1,s_2) =
D_i\left(p_1^{\star},p_2^{\star}\right)\left(p_i^{\star} - s_i\right)$
as the equilibrium profit function of the static game.

The domain of the PDE problem is $x_1>0, x_2>0$. When one firm runs
out of capacity, the other has a monopoly. We denote by $v_M(x)$ the
value function of a monopolist with remaining capacity $x$, which we
will study in the next section. On $x_2=0$, $x_1>0$, Firm 1 has a
monopoly, so $V_2(x_1,0) \equiv 0$ and $V_1(x_1,0) = v_M(x_1)$. On
$x_1=0$, $x_2>0$, Firm 2 has a monopoly, so $V_1(0,x_2)\equiv 0$ and
$V_2(0,x_2) = v_M(x_2)$.

As is well known, it is extremely difficult to provide existence and
regularity results for systems of PDEs arising from nonzero-sum
differential games and we do not attempt to do so here.  Some results
on weak solutions are found in \citet{bf,bensoussan02}, for related
problems on smooth bounded domains with absorbing boundary
conditions. In contrast, zero-sum games, which are characterized by a
scalar equation, have a well studied viscosity theory; see, for
example \citet{flemingSoug89}. Mean Field Games are an intermediate
case characterized by a system of two PDEs and some regularity results
exist (\citet{lasryLions07jap}). We also mention some analytical
progress can be made in nonzero-sum stochastic differential games of
Dynkin type, that is games on stopping times; see \citet{hamadene10}.

In the stochastic game, the intuition is that the elliptic operator
$\mathcal{L}$ will provide regularity which is supported in the
numerical results of Section \ref{sec:numericalAnalysis}. In the
non-stochastic game, when $\gamma$ is small enough, we obtain
regular asymptotic approximations in Section \ref{asymp} because
the strength of competition between firms is weak.

\subsection{Monopoly Problem}
When one firm has a monopoly over the market, the dynamics for the
firm's remaining capacity, $x(t)$, is given by
\begin{equation*}
dx(t) = -D_M\left(p(x(t))\right)dt + \sigma dW(t),
\end{equation*}
where $W$ is a Brownian motion. The value function of the monopoly
firm as a function of its initial capacity $x_0 = x \in\mathbb{R}_+$ is
defined to be the maximum expected discounted lifetime profit
\begin{equation}\label{eq:monopolyValueFunction}
v_M(x) \triangleq \sup_{p\geq0} \expec\left\{ \int_0^{\infty}
  e^{-rt}p\left(x(t)\right)D_M\left(p\left(x(t)\right)\right)\indicator_{\left\{x(t)>0\right\}}~dt
\right\}.
\end{equation}
The associated Bellman equation for this stochastic control problem is the ODE
\begin{equation*}\label{eq:monopolyBellmanEqn}
\frac{1}{2}\sigma^2v_M^{\prime\prime} +\sup_{p\geq 0} \left\{D_M(p)\left(p-v_M^{\prime}\right)\right\}-rv_M=0,
\end{equation*}
with boundary condition $v_M(0) = 0$. We look for solutions in which $\lim_{x\rightarrow\infty}v_M^{\prime}(x) = 0$.

As we are working in the case of linear demands, we find from \eqnref{eq:monopPriceAndDemand}:
\begin{equation}\label{themonopODE}
\frac{1}{2}\sigma^2v_M^{\prime\prime} + \frac{1}{4\beta}\left(v_M^{\prime}-\alpha\right)^2 -rv_M = 0.
\end{equation}
In the case $\sigma = 0$, the monopoly ODE is given by
\begin{equation}\label{linearODEAnalysisZeroSigODE}
\frac{1}{4\beta}\left(v_M^{\prime}-\alpha\right)^2 -rv_M = 0,
\end{equation}
and we can find an explicit solution.
\begin{prop}\label{prop:monopexplicit} The value function for the monopoly with $\sigma=0$ is
\begin{equation}\label{zeroSigFinalSolution}
v_M(x) = \frac{\alpha^2}{4\beta r}\left[\mathbf{W}\left(-e^{-\mu x-1}\right) + 1\right]^2,
\end{equation}
where $\mu = (2\beta r)/\alpha$ and $\mathbf{W}$ is the Lambert $\mathbf{W}$
function defined by the relation $Y = \mathbf{W}(Y) e^{\mathbf{W}(Y)}$
with domain $Y\geq -e^{-1}$.
\end{prop}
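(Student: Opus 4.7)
My plan is to reduce the first-order nonlinear ODE \eqnref{linearODEAnalysisZeroSigODE} to a separable ODE and then integrate it, recognising the resulting transcendental equation as being in Lambert $\mathbf{W}$ form.

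First, I would use the asymptotic condition $\lim_{x\to\infty}v_M'(x)=0$ to pin down the branch. Rewriting \eqnref{linearODEAnalysisZeroSigODE} as $(\alpha-v_M')^2 = 4\beta r\, v_M$ shows that $v_M$ must be non-negative and, in particular, uniformly bounded by $\alpha^2/(4\beta r)$ (attained as $x\to\infty$ where $v_M'\to 0$). Since $v_M$ starts at $0$ and tends to this positive ceiling, the function is increasing and $v_M' - \alpha$ stays negative, so we select the negative root and obtain the scalar ODE
\begin{equation*}
v_M'(x) \;=\; \alpha - 2\sqrt{\beta r\, v_M(x)}, \qquad v_M(0)=0.
\end{equation*}

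Next I would substitute $u(x) = 2\sqrt{\beta r\, v_M(x)}/\alpha \in [0,1)$, so that $v_M = \alpha^2 u^2/(4\beta r)$ and $v_M' = \alpha^2 u u'/(2\beta r)$. The ODE becomes $\alpha^2 u u'/(2\beta r) = \alpha(1-u)$, i.e.
\begin{equation*}
\frac{u}{1-u}\, u'(x) \;=\; \mu, \qquad \mu \;=\; \frac{2\beta r}{\alpha}.
\end{equation*}
Since $u/(1-u) = -1 + 1/(1-u)$, integrating and using $u(0)=0$ (from $v_M(0)=0$) to fix the constant gives the implicit relation $-u - \ln(1-u) = \mu x$, which I would re-exponentiate as $(1-u)e^{1-u} = e^{-\mu x}$ and then rewrite as
\begin{equation*}
(u-1)\,e^{u-1} \;=\; -e^{-\mu x - 1}.
\end{equation*}

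Now I would invoke the defining relation $Y = \mathbf{W}(Y)e^{\mathbf{W}(Y)}$ of the Lambert function. Since the argument $-e^{-\mu x - 1}$ lies in $[-e^{-1},0)$ for $x\ge 0$, the principal branch applies and is well-defined, and we may identify $u-1 = \mathbf{W}\!\bigl(-e^{-\mu x - 1}\bigr)$. Substituting back yields $v_M(x) = \frac{\alpha^2}{4\beta r}\bigl[\mathbf{W}(-e^{-\mu x - 1}) + 1\bigr]^2$, which is \eqnref{zeroSigFinalSolution}. Finally I would verify the boundary data: at $x=0$ the argument of $\mathbf{W}$ is $-e^{-1}$ and $\mathbf{W}(-e^{-1}) = -1$, giving $v_M(0)=0$; as $x\to\infty$ the argument tends to $0$ and $\mathbf{W}\to 0$, so by implicit differentiation of $\mathbf{W}(Y)e^{\mathbf{W}(Y)} = Y$ the derivative tends to zero, confirming the correct branch was chosen.

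The only subtle step is the branch selection for the square root and for $\mathbf{W}$, which I would justify using the asymptotic condition at infinity, together with monotonicity of $v_M$. Everything else is routine separation of variables and algebraic manipulation.
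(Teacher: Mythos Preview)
Your argument is correct and constructive, whereas the paper's proof is a pure verification: it simply lists the relevant properties of the Lambert $\mathbf{W}$ function ($\mathbf{W}(-e^{-1})=-1$, $\mathbf{W}(0)=0$, and the derivative identity $\mathbf{W}'(z)=\mathbf{W}(z)/\bigl(z(1+\mathbf{W}(z))\bigr)$) and checks by substitution that \eqnref{zeroSigFinalSolution} satisfies \eqnref{linearODEAnalysisZeroSigODE} together with $v_M(0)=0$. Your route---select the square-root branch via the asymptotic condition, substitute $u=2\sqrt{\beta r\,v_M}/\alpha$, separate variables, and recognise the Lambert form---actually \emph{derives} the formula rather than merely confirming it, and in the process explains why $\mathbf{W}$ appears at all and why the principal branch is the relevant one. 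The paper's approach is shorter and sidesteps any discussion of uniqueness or branch selection for the ODE itself; yours is more informative but requires the extra monotonicity/branch argument you supply.

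One small slip: the intermediate exponentiation should read $(1-u)e^{-(1-u)}=e^{-\mu x-1}$ (equivalently $(1-u)e^{u}=e^{-\mu x}$), not $(1-u)e^{1-u}=e^{-\mu x}$. Your next displayed line $(u-1)e^{u-1}=-e^{-\mu x-1}$ is nonetheless correct, so the conclusion is unaffected.
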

\begin{proof}
It is straightforward to check that the Lambert $\mathbf{W}$ function
satisfies $\mathbf{W}(z) < 0$ for $z\in[-e^{-1},0)$,
$\mathbf{W}\left(-e^{-1}\right) = -1$, $\mathbf{W}(0) = 0$, and
$\mathbf{W}^{\prime}(z) = \mathbf{W}(z)/\left(z\left(1+\mathbf{W}(z)\right)\right)$ for $z>-e^{-1}$,
and therefore that \eqnref{zeroSigFinalSolution} indeed satisfies
\eqnref{linearODEAnalysisZeroSigODE} and the boundary condition
$v_M(0) = 0$. We note that the restriction of the domain of
$\mathbf{W}$ to $[-e^{-1},\infty)$ is sufficient as the argument to
$\mathbf{W}$ in \eqnref{zeroSigFinalSolution} is equal to $-e^{-1}$
when $x=0$ and increases to zero as $x$ increases to infinity.
\end{proof}
Inserting $s= v_M^{\prime} = -\alpha\mathbf{W}\left(-e^{-\mu
    x-1}\right)$, into the optimal monopoly price strategy and demand
functions given in \eqnref{eq:monopPriceAndDemand}, we find
$p^{\star}_M\left(v_M^{\prime}(x)\right) =
\frac{\alpha}{2}\left(1-\mathbf{W}\left(-e^{-\mu x-1}\right)\right)$.
We plot this function in Figure \ref{odeMonopPrice}. We note that the
price at the zero capacity level is given by $\alpha$, and in the
limit as $x \to \infty$, $p^{\star}_M \rightarrow \frac{\alpha}{2}$.
The demand is given by
$D_M^{\star}\left(v_M^{\prime}(x)\right) =
\frac{1}{2\beta}\left(\alpha - v_M^{\prime}(x)\right) =
\frac{\alpha}{2\beta}\left(1+\mathbf{W}\left(-e^{-\mu
      x-1}\right)\right)$, which is strictly positive for all $x >
0$.
\begin{figure}[htp]
\begin{center}
  \includegraphics[width=3.7in]{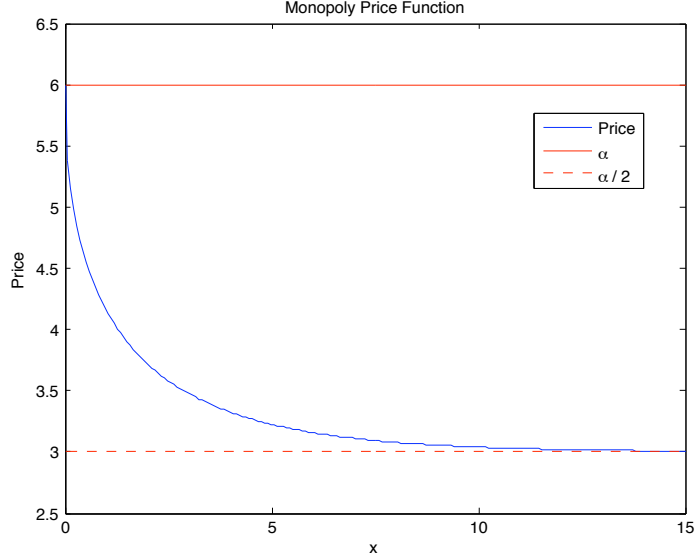}\\
  \caption{$p^{\star}_M(x)$ vs $x$ for $\sigma=0$}\label{odeMonopPrice}
  \end{center}
\end{figure}

\subsection{Small Degree of Substitutability Asymptotics under
  Deterministic Demand\label{asymp}}
We further analyze the non-stochastic (or ordinary) differential game ($\sigma_1 = \sigma_2
= 0$) in which the monopoly problem is explicitly solvable, as in
Proposition \ref{prop:monopexplicit}. We first note that $\gamma=0$ is equivalent to stating that firms have
independent goods in the sense that they operate in markets without
competing with one another. Hence, they have monopolies in their own
markets, and it is clear that $V_i(x_1,x_2) = v_M(x_i)$, where $v_M$
is the monopoly value function given in \eqnref{zeroSigFinalSolution}. When $\gamma > 0$, firms produce goods that are
substitutable and are actually in competition with one another. We
construct a perturbation expansion around the non-competitive case for
small $\gamma>0$ to view the effects of a small amount of competition.

Recall our PDE system \eqnref{eq:PDEsystem2}, with $\mathcal{L}
\equiv 0$.
We look for an approximation to the solution of this system of PDEs of the form
\begin{equation}\label{eqn:expansionAnsatz}
V_i(x_1,x_2) = v_M(x_i) + \gamma v_i^{(1)}(x_1,x_2) + \gamma^2v_i^{(2)}(x_1,x_2) + \cdots .
\end{equation}
We use the static equilibrium price and demand functions that are given in
\eqnref{eqn:pstar2}-\eqnref{eqn:Dstar2}, because we are working under the standing assumption that we have a Type \ref{NEtype1} equilibrium.
We first expand $S_i\left(\mathbf{x}\right) =
s_i^{(0)}\left(\mathbf{x}\right) + \gamma
s_i^{(1)}\left(\mathbf{x}\right) + \gamma^2
s_i^{(2)}\left(\mathbf{x}\right)+\cdots$, where from
\eqnref{eq:theShadowCosts}, we have
\begin{equation}
s_i^{(0)} = v_M^{\prime}(x_i), ~~ s_i^{(1)} = \pderiv{v_i^{(1)}}{x_i}, ~~ s_i^{(2)} = \pderiv{v_i^{(2)}}{x_i} - \frac{1}{\beta}\pderiv{v_i^{(1)}}{x_j}.
\end{equation}
Making use of \eqnref{eq:monopPriceAndDemand}, the relevant expansions in powers of $\gamma$ are
\begin{eqnarray}
p_i^{\star}(s_1,s_2) &=& p_M^{\star}(s_i^{(0)}) -
\frac{\gamma}{2}D_M^{\star}(s_j^{(0)}+2\beta s_i^{(1)}) -
\frac{\gamma^2}{4\beta}D_M^{\star}(s_i^{(0)}+2\beta s_j^{(1)} +
  4\beta^2 s_i^{(2)}) + \cdots,\label{eqn:exp1}\\
D_i^{\star}\left(s_1,s_2\right) &=& D_M^{\star}(s_i^{(0)})
- \frac{\gamma}{2\beta}D_M^{\star}(s_j^{(0)} - 2\beta
  s_i^{(1)}) -
\frac{3\gamma^2}{4\beta^2}D_M^{\star}(s_i^{(0)}+\frac{2}{3}\beta
  s_j^{(1)} - \frac{4}{3}\beta^2 s_i^{(2)}) + \cdots,\label{eqn:exp2}\\
D_M\left(p_i^{\star}\right) &=& D_M^{\star}(s_i^{(0)}) +
\frac{\gamma}{2\beta}D_M^{\star}(s_j^{(0)} + 2\beta
  s_j^{(1)}) +
\frac{\gamma^2}{4\beta^2}D_M^{\star}(s_i^{(0)} + 2\beta s_j^{(1)}
  + 4\beta^2 s_i^{(2)}) + \cdots. \label{eqn:exp3}
\end{eqnarray}

We define
\begin{equation}\label{eqn:defOflilq}
q(x) \triangleq D_M^{\star}\left(v_M^{\prime}(x)\right) = \frac{1}{2\beta}\left(\alpha - v_M^{\prime}\left(x\right)\right).
\end{equation}
Then, inserting \eqnref{eqn:expansionAnsatz} into
\eqnref{eq:PDEsystem2}, using \eqnref{eqn:exp1}-\eqnref{eqn:exp3}, and
comparing terms in $\gamma$ and $\gamma^2$, give that $v_i^{(1)}$ and $v_i^{(2)}$ satisfy
\begin{eqnarray}
q(x_1)\pderiv{v^{(1)}_i}{x_1} + q(x_2)\pderiv{v^{(1)}_i}{x_2} + rv^{(1)}_i &=& -q(x_1)q(x_2), \label{eqn:expansionPDE2} \\
q(x_1)\pderiv{v_i^{(2)}}{x_1} + q(x_2)\pderiv{v_i^{(2)}}{x_2} + rv_i^{(2)} &=& \frac{1}{2\beta}\left(\pderiv{v_j^{(1)}}{x_j} + q(x_i)\right)\cdot\left(\pderiv{v_i^{(1)}}{x_j} + q(x_i)\right) \nonumber \\
&~&+ \frac{1}{4\beta}\left(\pderiv{v_i^{(1)}}{x_i} + q(x_j)\right)^2 - \frac{3}{2\beta}\left(q(x_i)\right)^2,\label{eqn:expansionPDE3}
\end{eqnarray}
for $i=1,2$ and $j\neq i$,  with boundary conditions $v^{(1)}_1(x_1,0) = v^{(2)}_1(x_1,0)  =0$ and $v^{(1)}_1(0,x_2) =v^{(2)}_1(0,x_2)= 0$.
\begin{prop}\label{prop:solnOfPDE1}
The solution $v_i^{(1)}$ is given, for $x_1 > x_2$, by
\begin{equation}\label{eq:v11Solution1}
v^{(1)}_1(x_1,x_2) =
\frac{\alpha^2}{4\beta^2r}\left(e^{-rQ(x_2)}\left(1+rQ(x_2)\right) -
  e^{-rQ(x_1)}\left(1-rQ(x_2)\right) +
  e^{-r\left(Q(x_1)+Q(x_2)\right)} - 1\right),
\end{equation}
where
\begin{equation}\label{expressionForQ}
Q(x) \triangleq \int_0^x \frac{1}{q(u)} du = -\frac{1}{r}\log\left(-\mathbf{W}\left(-e^{-\mu x-1}\right)\right),
\end{equation}
and, for $x_2 \geq x_1$, by reversing the roles of $x_1$ and $x_2$ in \eqnref{eq:v11Solution1}.
The solution for $v^{(1)}_2$ is clearly the same, i.e. $v_2^{(1)} \equiv v_1^{(1)}$.
\end{prop}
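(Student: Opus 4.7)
The PDE \eqnref{eqn:expansionPDE2} is a linear first-order transport equation with source, so my approach would be the method of characteristics. The characteristic system $\dot x_1 = q(x_1),\ \dot x_2 = q(x_2)$ integrates in closed form to $Q(x_i(\tau)) = Q(x_i^0) + \tau$, so the characteristic curves are level sets of $Q(x_1) - Q(x_2)$. Along each characteristic, $v_1^{(1)}$ satisfies the scalar linear ODE $dv/d\tau + rv = -q(x_1(\tau))q(x_2(\tau))$, which I would solve by variation of parameters with integrating factor $e^{r\tau}$.

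For $x_1^0 > x_2^0$ I would trace the characteristic backwards in time. Since $Q$ is strictly increasing, $x_2$ hits the axis $\{x_2 = 0\}$ before $x_1$ does, at $\tau^{\star} = -Q(x_2^0)$, where the boundary condition $v_1^{(1)}(x_1,0) = 0$ applies. Variation of parameters then gives
\begin{equation*}
v_1^{(1)}(x_1^0, x_2^0) \;=\; -\int_{\tau^{\star}}^{0} e^{r\sigma}\, q(x_1(\sigma))\, q(x_2(\sigma))\, d\sigma.
\end{equation*}
The case $x_2^0 > x_1^0$ is identical with the two coordinates swapped, and at $x_1^0 = x_2^0$ both limits agree so the piecewise formula is continuous across the diagonal.

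The key algebraic step that makes this integral explicit is the identity
\begin{equation*}
q(x) \;=\; \frac{\alpha}{2\beta}\bigl(1 - e^{-rQ(x)}\bigr),
\end{equation*}
which I would derive by combining the explicit form of $v_M'$ from Proposition \ref{prop:monopexplicit} with the closed-form expression for $Q$ in \eqnref{expressionForQ}, or equivalently via the identity $\mathbf{W}(-e^{-\mu x - 1}) = -e^{-rQ(x)}$. Setting $E_i \triangleq e^{-rQ(x_i^0)}$ and using $Q(x_i(\sigma)) = Q(x_i^0) + \sigma$, the integrand collapses to
\begin{equation*}
\frac{\alpha^2}{4\beta^2}\bigl(e^{r\sigma} - (E_1 + E_2) + E_1 E_2\, e^{-r\sigma}\bigr),
\end{equation*}
each term of which integrates elementarily over $[\tau^{\star}, 0]$. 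Collecting terms and substituting back $E_i = e^{-rQ(x_i)}$, $-\tau^{\star} = Q(x_2)$ produces \eqnref{eq:v11Solution1}.

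To finish I would verify the boundary conditions directly from the explicit formula: since $Q(0) = 0$, setting $x_2 = 0$ makes $e^{-rQ(x_2)} = 1$ and the bracket in \eqnref{eq:v11Solution1} reduces to $(1 - e^{-rQ(x_1)} + e^{-rQ(x_1)} - 1) = 0$, and similarly at $x_1 = 0$. Finally, because the PDE and the zero boundary data for $v_2^{(1)}$ coincide with those for $v_1^{(1)}$, the uniqueness of the transport problem (all characteristics meet the absorbing boundary $\{x_1 x_2 = 0\}$ in finite backward time) immediately gives $v_2^{(1)} \equiv v_1^{(1)}$. The one non-routine point is spotting the $q$--$Q$ identity; without it the integrand contains $q\circ Q^{-1}$ and does not obviously reduce to exponentials.
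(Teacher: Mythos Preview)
Your proposal is correct and follows essentially the same route as the paper. The paper first makes the change of variables $(\xi,\eta)=(Q(x_1),Q(x_2))$ and sets $u=e^{\frac{r}{2}(\xi+\eta)}v_i^{(1)}$ to reduce \eqnref{eqn:expansionPDE2} to the constant-coefficient transport equation $u_\xi+u_\eta=f$, then integrates along the straightened characteristics; you instead parameterize the characteristics $\dot x_i=q(x_i)$ directly by $\tau$ and apply the integrating factor $e^{r\tau}$ to the scalar ODE. These are the same computation in different coordinates, and both hinge on the identical key identity $q\!\left(Q^{-1}(s)\right)=\tfrac{\alpha}{2\beta}(1-e^{-rs})$ (your $q(x)=\tfrac{\alpha}{2\beta}(1-e^{-rQ(x)})$), which is what makes the integral elementary.
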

\begin{proof}
  The first step is to make the change of variables $(\xi,\eta) =
  \left(Q(x_1),Q(x_2)\right)$ and $u(\xi,\eta) =
  e^{\frac{r}{2}\left(\xi+\eta\right)}v_i^{(1)}\left(Q^{-1}(\xi),Q^{-1}(\eta)\right)$
  in \eqnref{eqn:expansionPDE2}, which gives
\begin{equation}\label{expansionPDEforU}
\pderiv{u}{\xi} + \pderiv{u}{\eta} = f(\xi,\eta),\qquad \xi,\eta>0;
\qquad u(\xi,0)=u(0,\eta)=0,
\end{equation}
where $f(\xi,\eta) \triangleq
-e^{\frac{r}{2}\left(\xi+\eta\right)}q\left(Q^{-1}(\xi)\right)q\left(Q^{-1}(\eta)\right).$
We see by the symmetry of this equation that $u(\xi,\eta)
= u(\eta,\xi)$. We first suppose that $\xi > \eta$ and solve the PDE
with the boundary condition $u(\xi,0) = 0$. The other half of the
solution can be obtained by symmetry.
The solution is
\begin{equation}
u(\xi,\eta) = \int_0^{\eta} f(s+\xi-\eta,s)~ds
= -\int_0^{\eta} e^{\frac{r}{2}(\xi - \eta +
  2s)}q\left(Q^{-1}\left(s+\xi-\eta\right)\right)q\left(Q^{-1}\left(s\right)\right)~ds. \label{solutionForUIntegralForm}
\end{equation}
By the definition of $q(x)$ in
\eqnref{eqn:defOflilq} and $v_M$ in \eqnref{zeroSigFinalSolution}, we have
$q(x) = \frac{\alpha}{2\beta}\left[1 + \mathbf{W}\left(-e^{-\mu
      x-1}\right)\right]$. This leads to \eqnref{expressionForQ} since
the range of $\mathbf{W}\left(-e^{-\mu x-1}\right)$ is $(-1,0)$. From
properties of the Lambert $\mathbf{W}$ function, it follows
easily that $-e^{-rs} = \mathbf{W}\left(-e^{-\mu Q^{-1}(s)-1}\right)$,
and hence
\begin{equation}
q\left(Q^{-1}(s)\right) = \frac{\alpha}{2
  \beta}\left(1+\mathbf{W}\left(-e^{-\mu Q^{-1}(s)-1}\right)\right) =
\frac{\alpha}{2\beta}\left(1 - e^{-rs}\right).
\end{equation}
We can now easily compute the integral in
\eqnref{solutionForUIntegralForm}. After restoring the
transformations, we obtain \eqnref{eq:v11Solution1}.
\end{proof}

\begin{remark}
It can be verified by direct computation that the solutions
$v_i^{(1)}$ in Proposition \ref{prop:solnOfPDE1}
are $\mathcal{C}^1$ on the line $x_1 =
x_2$.
One can also solve the PDEs in \eqnref{eqn:expansionPDE3} to obtain a
second-order correction for the value functions. We present this
solution in Appendix \ref{appendix:2ndOrderPDE}.
\end{remark}

\begin{remark}
  While we do not give a formal convergence proof for the asymptotic
  approximation, we note that, since $v_i^{(1)}$ and $v_i^{(2)}$ are
  bounded with bounded continuous first-derivatives, the error terms
  $E_i$ defined by $V_i=v_M+\gamma v_i^{(1)} + \gamma^2 v_i^{(2)} +
  E_i$ solve
\[ q(x_1)\pderiv{E_i}{x_1} + q(x_2)\pderiv{E_i}{x_2} + rE_i =
O(\gamma^3) \] with $E_i(x_1,0) = E_i(0,x_2)=0$. It follows from here
that, for fixed $x_1,x_2>0$, $|E_i(x_1,x_2)|=O(\gamma^3)$ as $\gamma\downarrow0$.
\end{remark}

\subsection{Discussion of Asymptotic Solution}
We plot $v_1^{(1)}$ in Figure \ref{fig:v1_and_v2}\subref{fig:v11_surface}. Intuitively, it decreases from zero on the
axes, because the first order correction decreases the value of the
game due to the transition from a one-player monopoly to a two-player
duopoly game. Hence, the greater the value of $\gamma$, the lower is
the lifetime profit of an individual firm.
\begin{figure}[htbp]
   \centering
   \subfigure[$v_1^{(1)}$: Surface]{
   \includegraphics[width=3in]{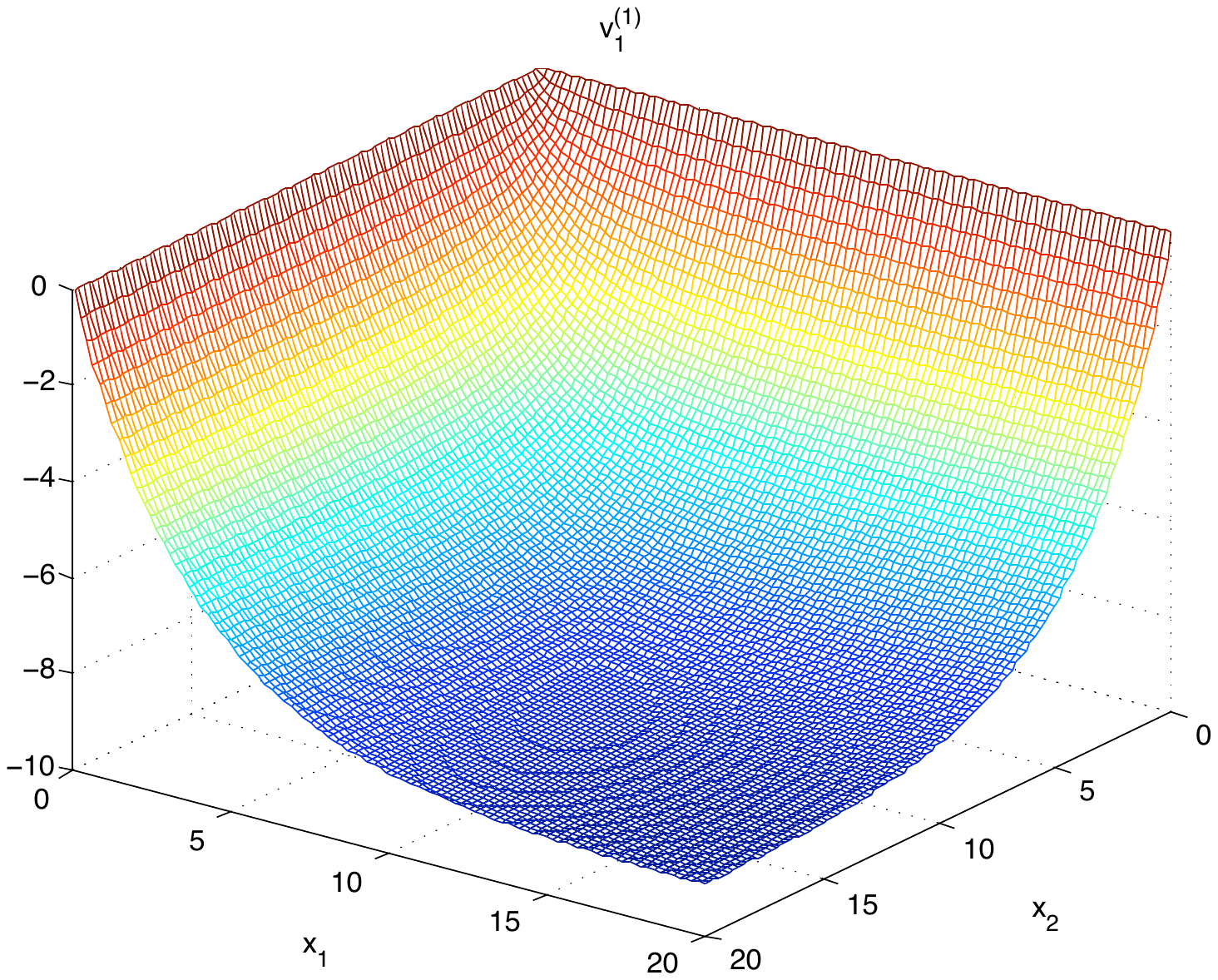}
   \label{fig:v11_surface}
   }
   \subfigure[$v_1^{(2)}$: Surface]{
  \includegraphics[width=3in]{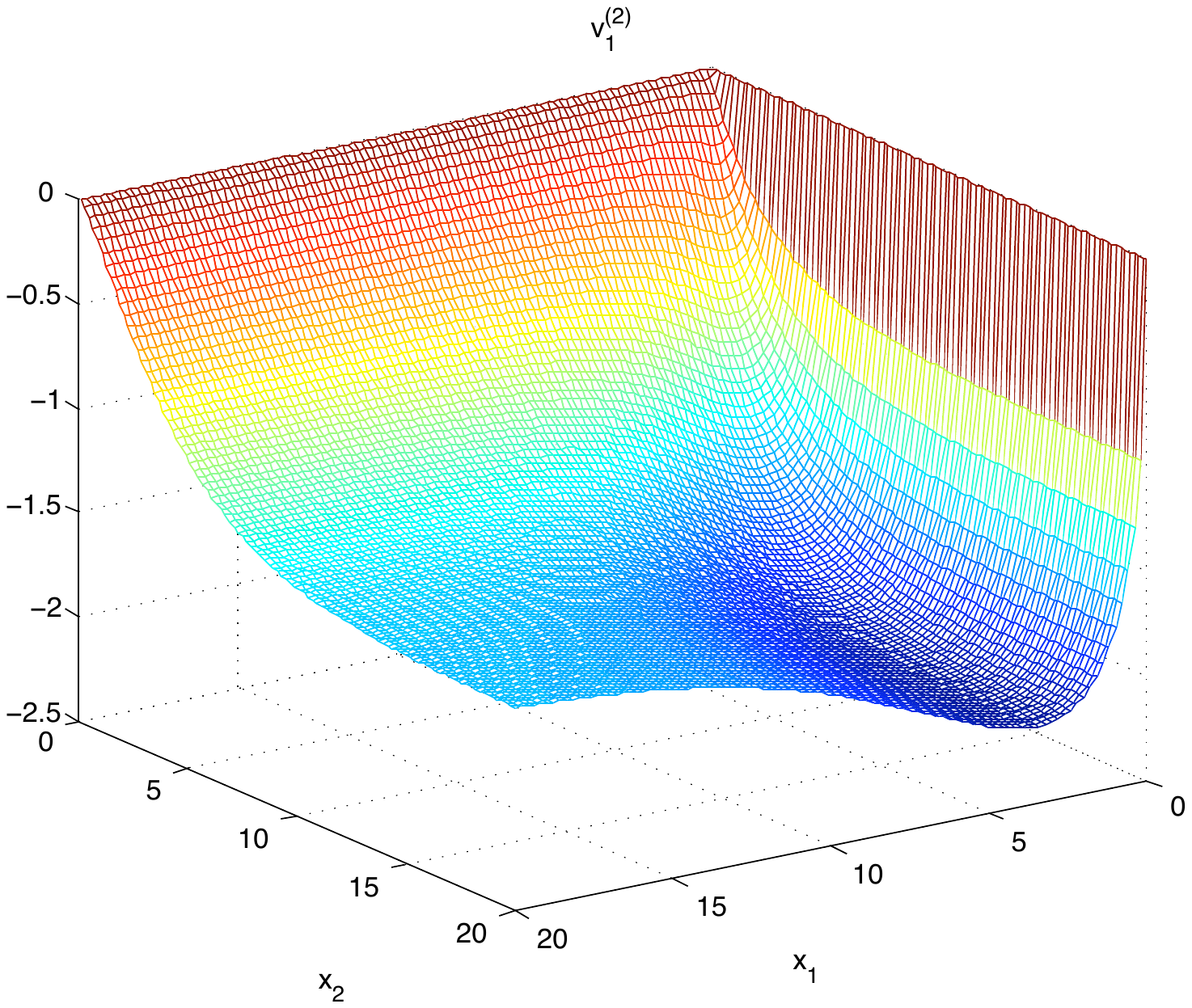}
   \label{fig:v12_surface}
   }
   \caption{$v_1^{(1)}$ and $v_1^{(2)}$. First- and second-order value function corrections.}
   \label{fig:v1_and_v2}
\end{figure}

We plot $v_1^{(2)}$ in Figure \ref{fig:v1_and_v2}\subref{fig:v12_surface}. It is again negative everywhere. Hence, the second-order correction serves to further decrease the value of the game from just the first-order approximation. We plot in Figure \ref{fig:v12minusv22} the difference between $v_1^{(2)}$ and $v_2^{(2)}$. We do not make the same comparison for $v_i^{(1)}$ because $v_1^{(1)} - v_2^{(1)} \equiv 0$.
We see from this figure that the sign of the difference is equal to the sign of $x_1 - x_2$. Consider the situation in which Firm 1 has larger lifetime capacity, $x_1 > x_2$. In the absence of competition, he has a larger value function, $v_M(x_1) > v_M(x_2)$. Introducing competition lowers his value by less than it lowers the value of the smaller firm. Therefore, competition serves to enhance the advantage of the larger firm. Of course, when two firms both have large remaining capacities, the inequality between firms does not have that much importance. It is only when one or both firms have small amounts of capacity remaining that inequalities across firms become magnified.
\begin{figure}[htbp]
   \centering
\includegraphics[width=3in]{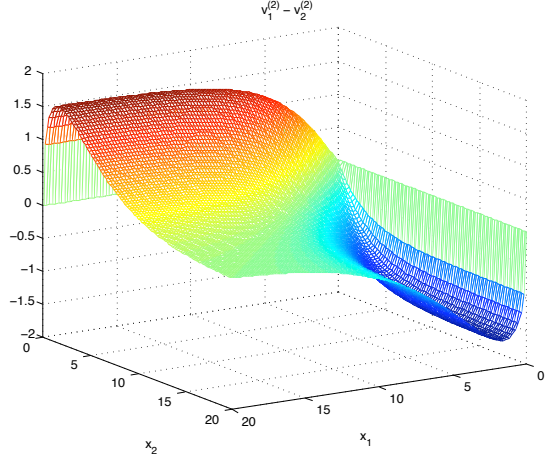}
    \caption{$v_1^{(2)} - v_2^{(2)}$. Difference between Firm 1 and Firm 2 second-order value function corrections.}
       \label{fig:v12minusv22}
   \end{figure}

For the remaining figures of this section we use the expansion up to order $\gamma$. We plot in Figure \ref{fig:priceAndProfit}\subref{fig:price1_x1_10_func_x2} the equilibrium price strategy of Firm 1 as a function of $x_2$ for various values of $\gamma$ using the first two terms of \eqnref{eqn:exp1}. For $\gamma=0$, $\bar p_1^{\star}$ does not change with $x_2$ as the goods are independent. For increasing $\gamma$, as we would expect, the prices decrease at a faster rate as a function of $x_2$ because $\bar p_1^{\star}$ is more sensitive to the price of firm 2 when $\gamma$ is larger.
\begin{figure}[htbp]
   \centering
   \subfigure[Equilibrium price as a function of $x_2$ for $x_1 = 10$ fixed]{
  \includegraphics[width=3in]{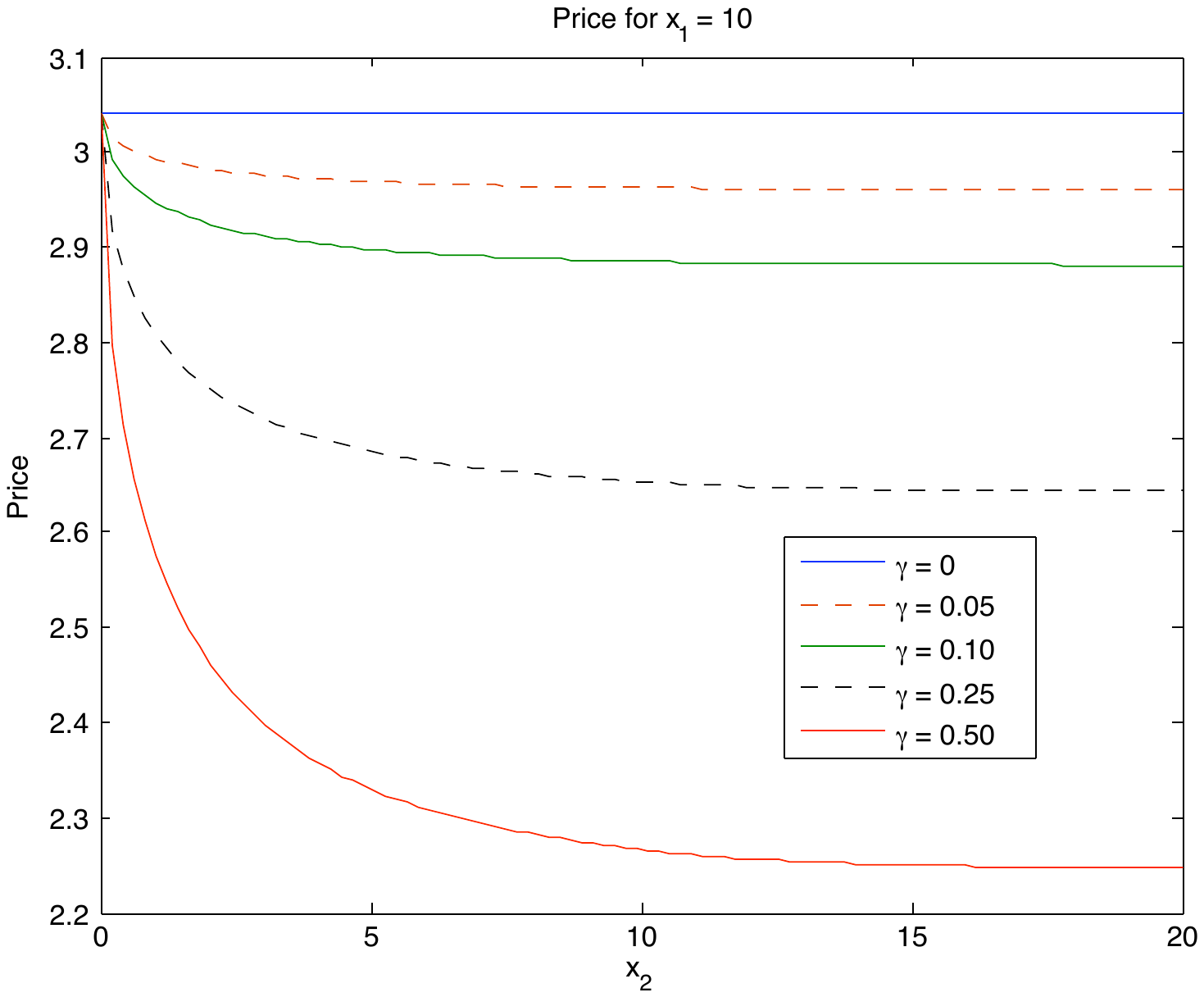}
   \label{fig:price1_x1_10_func_x2}
   }
      \subfigure[$\bar p_1^{\star}\times D_1\left(\bar p_1^{\star},\bar p_2^{\star}\right)$ as a function of $\gamma$ for $x_1 = 10$]{
   \includegraphics[width=3.1in]{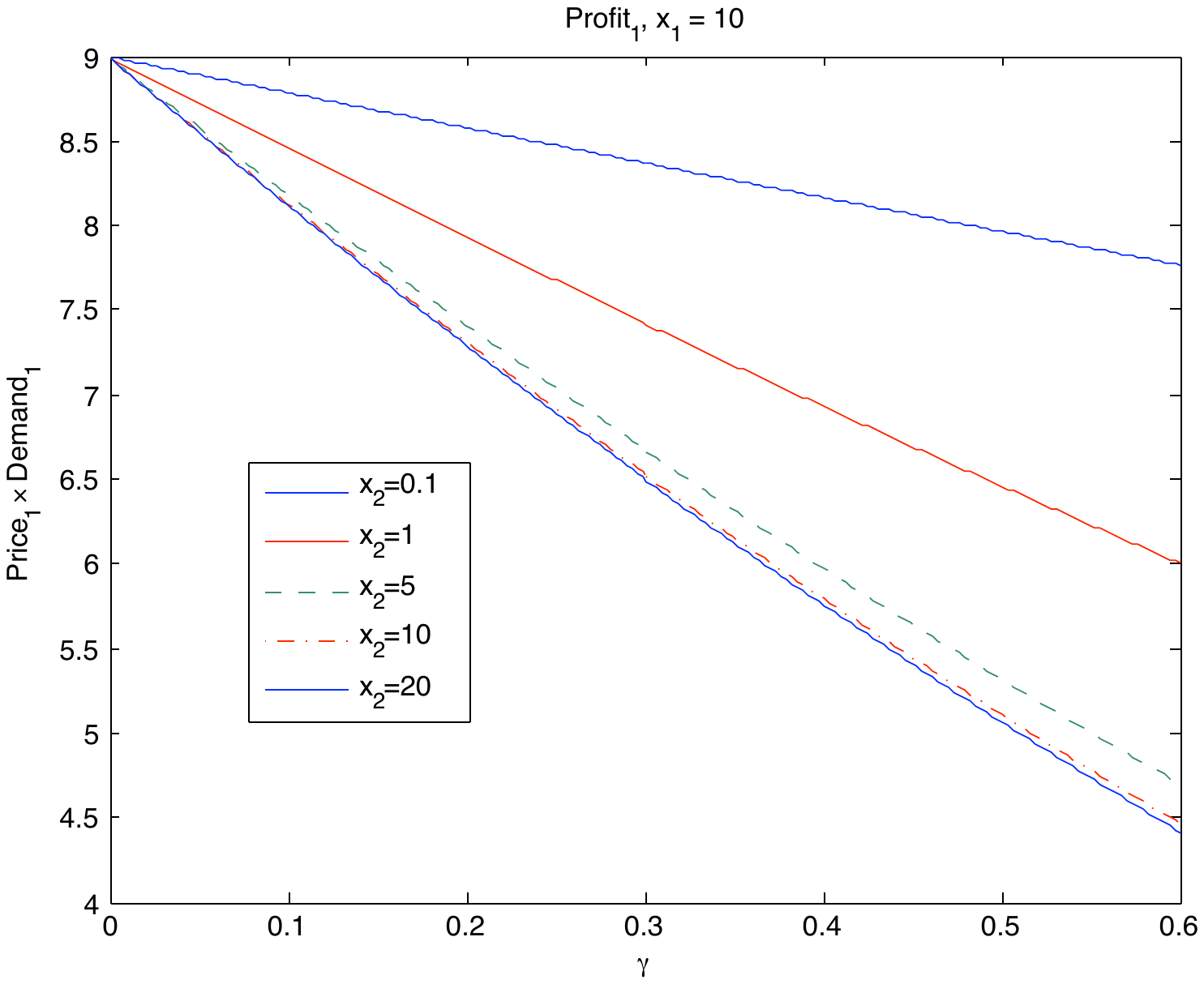}
   \label{fig:profit1_x1_10}
   }
    \caption{Equilibrium Price and Profit for Firm 1}
    \label{fig:priceAndProfit}
\end{figure}
In Figure \ref{fig:priceAndProfit}\subref{fig:profit1_x1_10}, we plot the instantaneous profit for Firm 1, $\bar p_1^{\star}D_1(\bar p_1^{\star},\bar p_2^{\star})$. Each line shows the impact at a fixed $(x_1,x_2)$ of increasing $\gamma$, and we see from the slopes that the effect on reducing instantaneous profit is greater when $x_2$ is larger. Hence, competition, as measured by $\gamma$, only has bite when the competitor has comparable lifetime capacity levels.

Thus, when one considers the effect of competition on profit and prices, there are two main pieces of information one must take into account. The first is the degree of substitutability between goods. Fundamentally this means how similar is your good to your competitor's. For example, one expects $\gamma$ to be large in the market for gasoline, because gas at one station is essentially the same as gas at a station across the street. The goods are not perfect substitutes because of travel costs, brand loyalty, and a host of other reasons. However, in the market for CDs, one expects a very low degree of substitutability, because an individual artist's music is typically highly differentiated from that of another artist, even within a specific genre. It is reasonable to assume in such a market that $\gamma$ is quite low.

The second main piece of information is how credible is your competition. In this model, the proxy for a credible competitor is their level of lifetime capacity. If your competitor has very little lifetime capacity, then it stands to reason that regardless of how similar their good is to your own good, you will be a monopoly in short order. In contrast, if your competitor has a large amount of lifetime capacity, then they are a very credible threat to your business. As such, you must take such them very seriously even if their good is highly differentiated (but still substitutable) with your good. This can be seen in Figure \ref{fig:priceAndProfit}\subref{fig:profit1_x1_10} as even with $\gamma=0.1$, when $x_2$ is large relative to $x_1$, the instantaneous profit is much less than when $x_2$ is relatively small.

In Figure \ref{fig:pathOfGame_multGam}, we plot the solution to
$\frac{dx_1}{dt} = -D_1\left(\bar p_1^{\star}(\mathbf{x}),\bar
p_2^{\star}(\mathbf{x})\right)$, where we have used our expansion of
order $\gamma$. We present the path of $x_1(t)$ over time for
various different values of $\gamma$ starting from $x_1(0) = x_2(0)
= 10$. We see that as $\gamma$ increases, the time of the game
increases. That is, it takes more time for Firm 1 to deplete their
lifetime capacity when the degree of substitutability is greater.
\begin{figure}[htbp]
   \centering
   \includegraphics[width=3.5in]{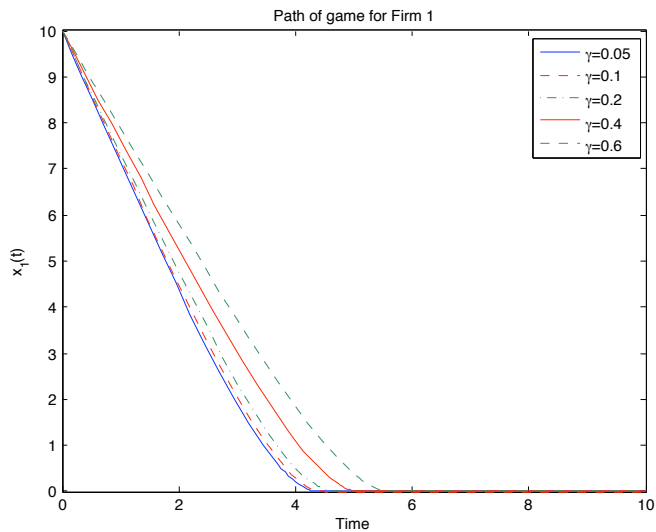}
   \caption{Path of Firm 1 Lifetime Capacity over time with $x_1(0) = x_2(0) = 10$ for various values of $\gamma$}
   \label{fig:pathOfGame_multGam}
\end{figure}
We can see this also by looking at the path of both demand and prices over the time of the game. These are plotted in Figures \ref{fig:pathOfGame_multGam_DandP} \subref{fig:pathOfGame_multGam_demand} and \subref{fig:pathOfGame_multGam_price}, respectively. We see that, for an individual firm, increasing the value of $\gamma$ drives down the price, while the demand first decreases and then increases. Common across all levels of $\gamma$, we see that as the capacity of the firm diminishes over time, the price increases while demand decreases.
\begin{figure}[htbp]
   \centering
   \subfigure[Demand]{
   \includegraphics[width=3.1in]{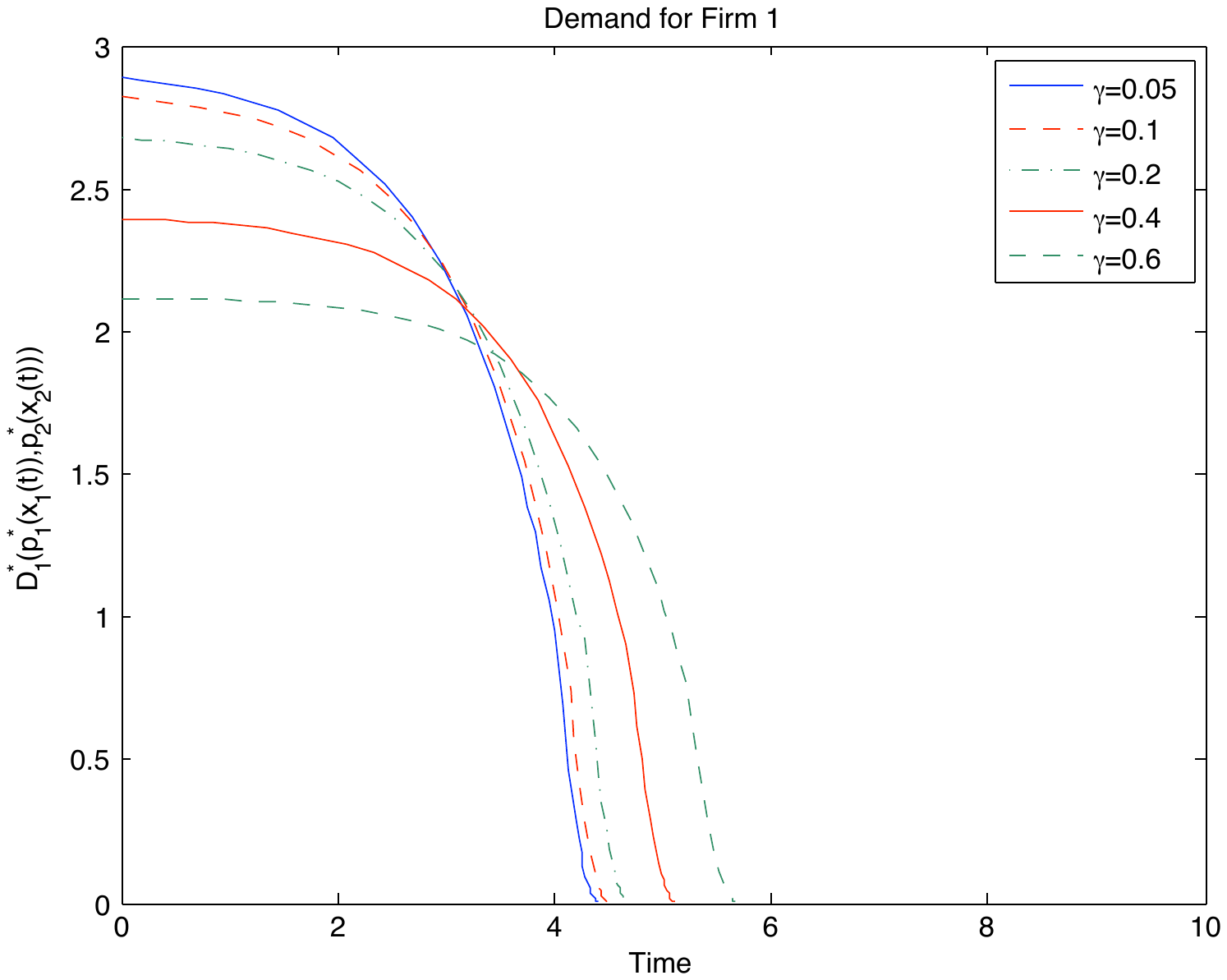}
   \label{fig:pathOfGame_multGam_demand}
   }
   \subfigure[Price]{
   \includegraphics[width=3.1in]{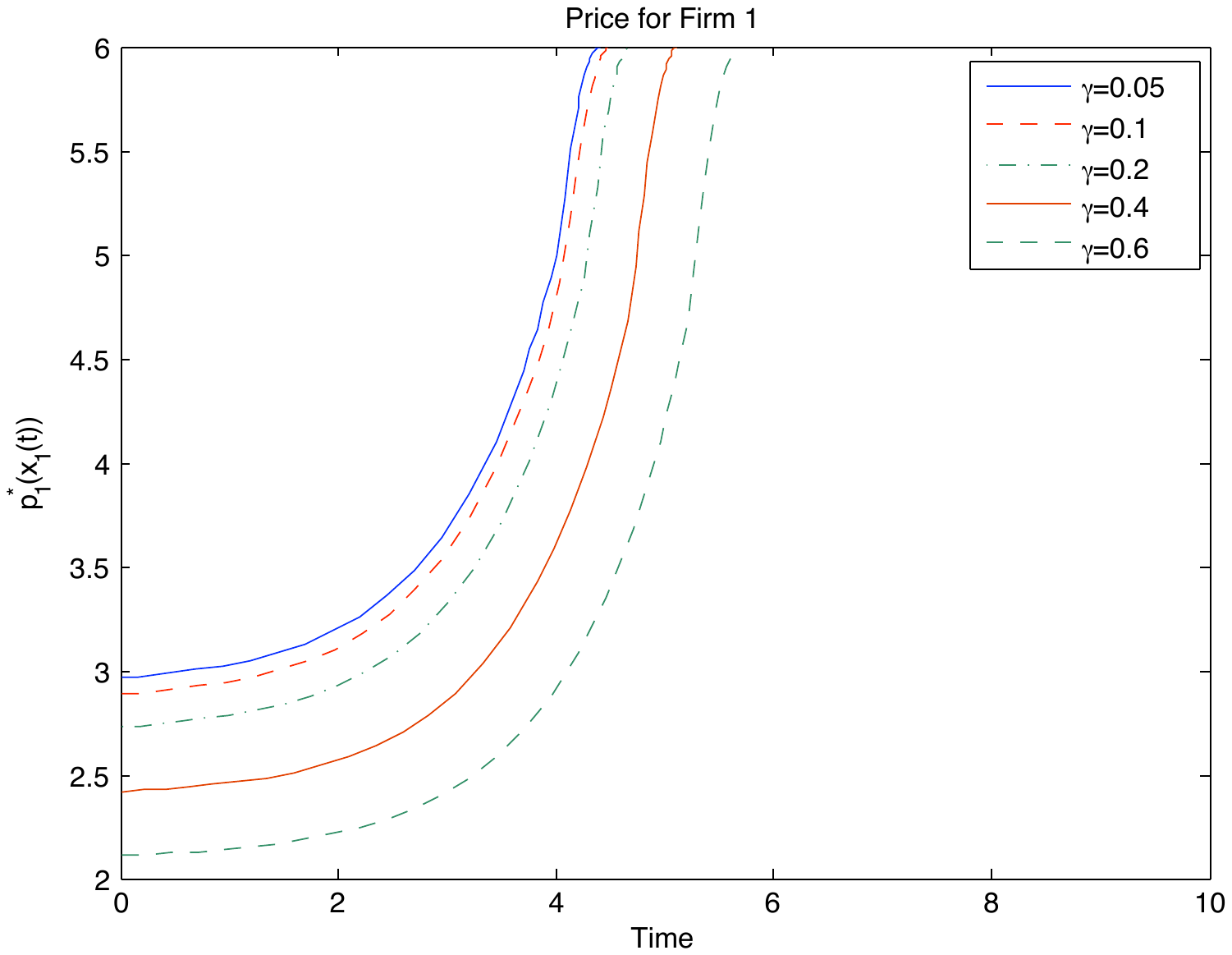}
   \label{fig:pathOfGame_multGam_price}
   }
   \caption{Path of Firm 1 Price and Demand over time with $x_1(0) = x_2(0) = 10$ for various values of $\gamma$}
   \label{fig:pathOfGame_multGam_DandP}
\end{figure}

Finally, we remark that one can confirm that the resulting demands from the asymptotic approximation for both firms are strictly positive in equilibrium. Therefore, our previous assumption on strictly positive demands is justified at least for $\gamma$ small enough.

\section{Numerical Analysis}\label{sec:numericalAnalysis}
We have been able to capture many of the qualitative features of the model analytically in the case of deterministic demand using the asymptotic expansions of the previous section. However, in order to fully analyze the stochastic model we solve the full PDE system numerically. With these solutions, we then simulate paths of the game.

\subsection{Finite Difference Solution of the PDE System}\label{sec:FDsolnPDE}
We employ fully implicit finite differences to find solutions $V_1$ and $V_2$ to the system in \eqnref{eq:PDEsystem2}. We can then use finite difference approximations to the derivatives of these value functions to obtain optimal price strategy functions and the resulting optimal demand functions. We use the parameter values $\alpha = 6$, $\beta = 1$, $r=1$, $\sigma_1 = \sigma_2 = 0.6$ and $\rho = 0.1$. The demands remain strictly positive for this choice of parameters and therefore, in this case, both firms are active participants in equilibrium.

For given levels of capacity $x_1$ and $x_2$, we define $\theta \triangleq \tan^{-1}\left(x_2/x_1\right)$.
This is the angle in $(x_1,x_2)$ space that corresponds to the given capacity levels. We plot in Figure \ref{thetaFigs} \subref{priceTheta_1}, the price of Firm 1 as a function of this $\theta$ for several values of $\gamma$. We also plot the optimal demand as a function of $\theta$ in Figure \ref{thetaFigs} \subref{demandTheta_1}.
\begin{figure}[htp]
   \centering
   \subfigure[Price]{
    \includegraphics[width=3.1in]{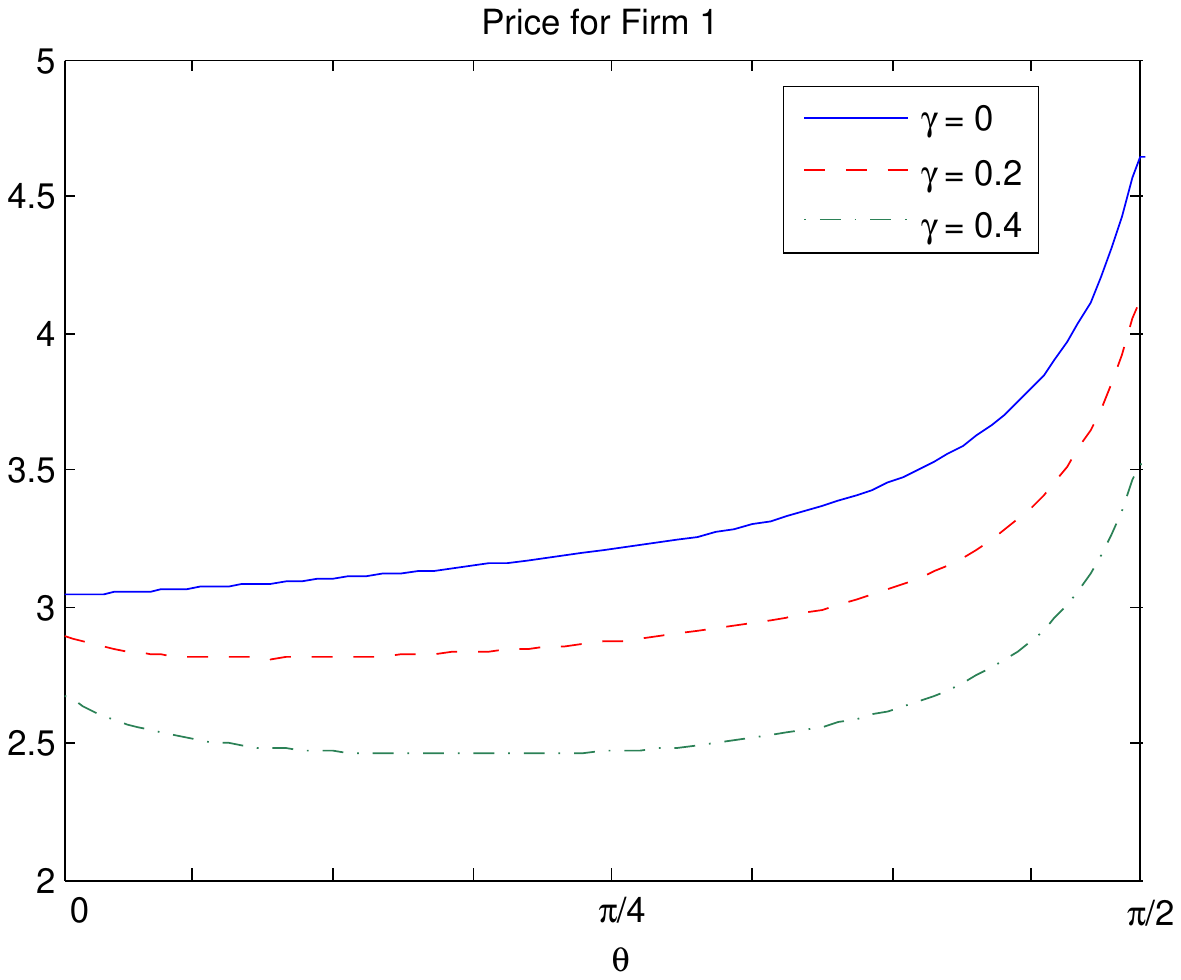}
   \label{priceTheta_1}
   }
   \subfigure[Demand]{
    \includegraphics[width=3.1in]{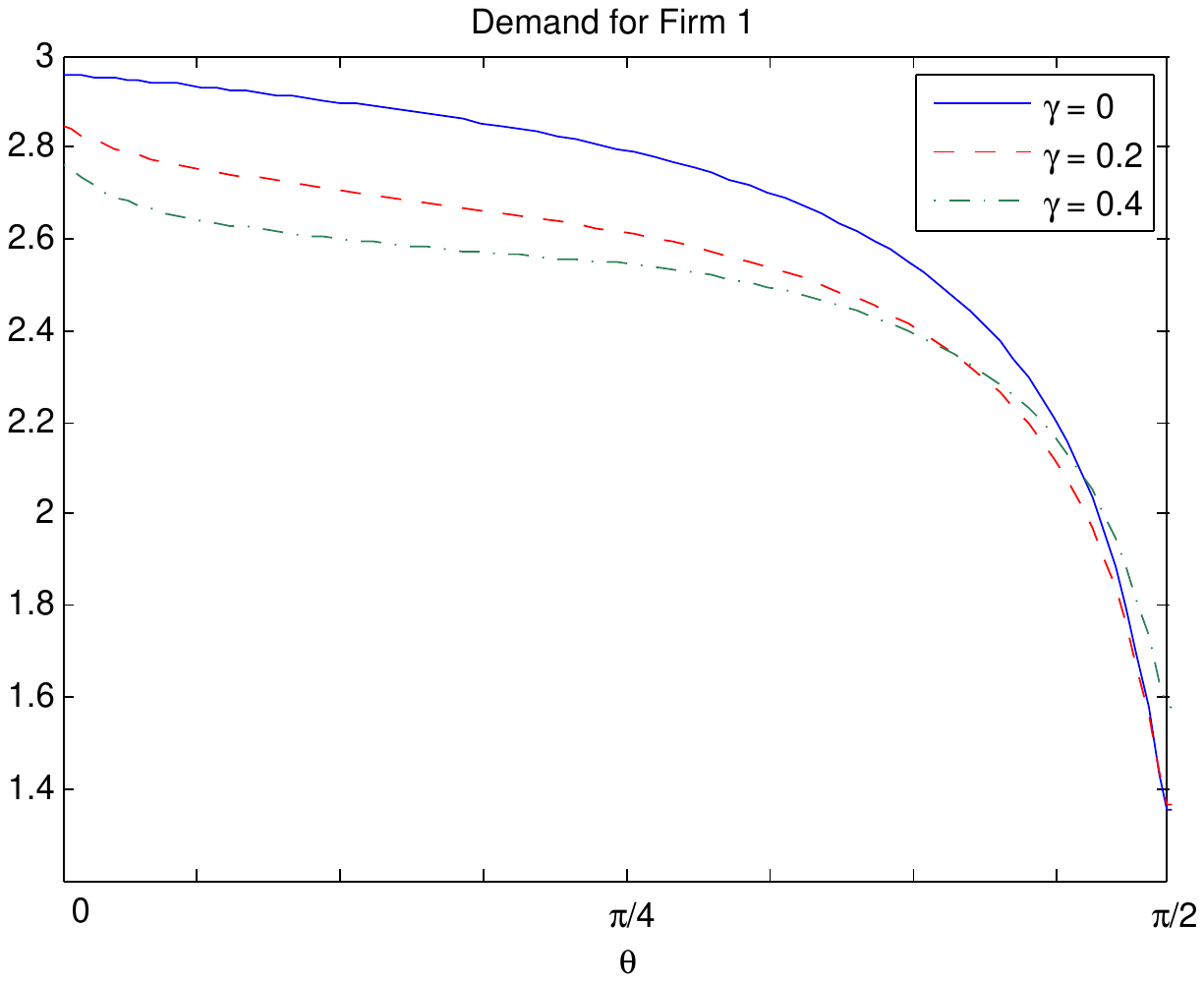}
   \label{demandTheta_1}
   }
    \caption{Price and Demand for Firm 1 as a function of $\theta$ for various values of $\gamma$}
    \label{thetaFigs}
\end{figure}
These figures allow us to analyze price and demand effects for a fixed level of overall capacity in the market. At one end of the spectrum, $\theta = 0$, we have that Firm 1 controls all of the capacity in the market. On the other end, $\theta = \pi/2$, Firm 2 controls all of the capacity. The price of Firm 1 decreases initially as $\theta$ increases, provided there is competition in the market, i.e. $\gamma > 0$. However, the price starts to increase once $\theta$ moves beyond $\pi/4$ and Firm 1 holds the minority share of the available capacity in the market. Furthermore, price decreases at a greater rate for higher levels of $\gamma$. This implies that the optimal price strategies result in the lowest prices when substitutability is high, i.e. $\gamma$ is large, and when all firms in a market are of the same relative size, i.e. $\theta$ is close to $\pi/4$. Increasing the number of firms in a market, thereby increasing the level of competition, does not always have a large effect on consumer welfare. This is because if an additional firm enters a market with a small capacity when there already exist large capacity firms, then essentially this means they are not credible competitors to the large firms and therefore prices do not necessarily have to adjust by a large amount to account for this additional competition. On the other hand, we see from these figures, for a given fixed level of total market capacity, consumers are always better off if the capacity is spread evenly across firms, provided the firms produce substitute goods (i.e. $\gamma > 0$). Consumers therefore face the best possible situation when there exist multiple firms in a market whose goods have a high degree of substitutability and whose relative sizes are the close to the same.

Figures \ref{contourP} \subref{contourP_0} and \subref{contourP_0p4} are the contours of the optimal price strategy for Firm 1 for two different values of $\gamma$. The strategy changes from charging a constant price as a function of $x_2$ when $\gamma = 0$, to charging a price that is decreasing in $x_2$ for $\gamma = 0.4$. This is as expected because prices should reflect the credibility of one's competitor when that competitor produces a good which is substitutable with one's own good. We see that although prices are decreasing in both $x_1$ and $x_2$, they decrease at a faster rate with respect to $x_1$.

\begin{figure}[htp]
    \centering
    \subfigure[$\gamma = 0$]{
    \includegraphics[width=3.1in]{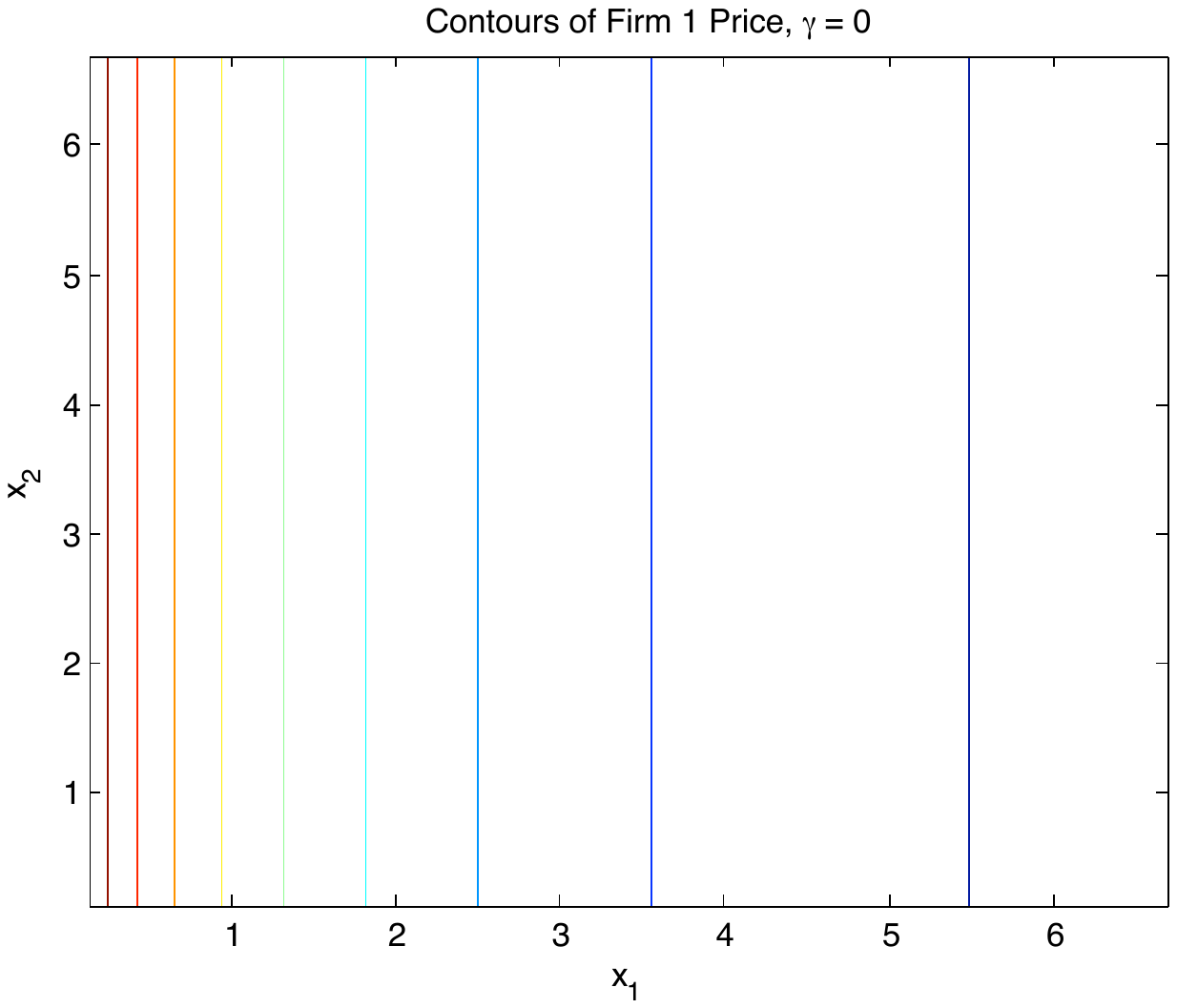}
    \label{contourP_0}
    }
    \subfigure[$\gamma = 0.4$]{
    \includegraphics[width=3.1in]{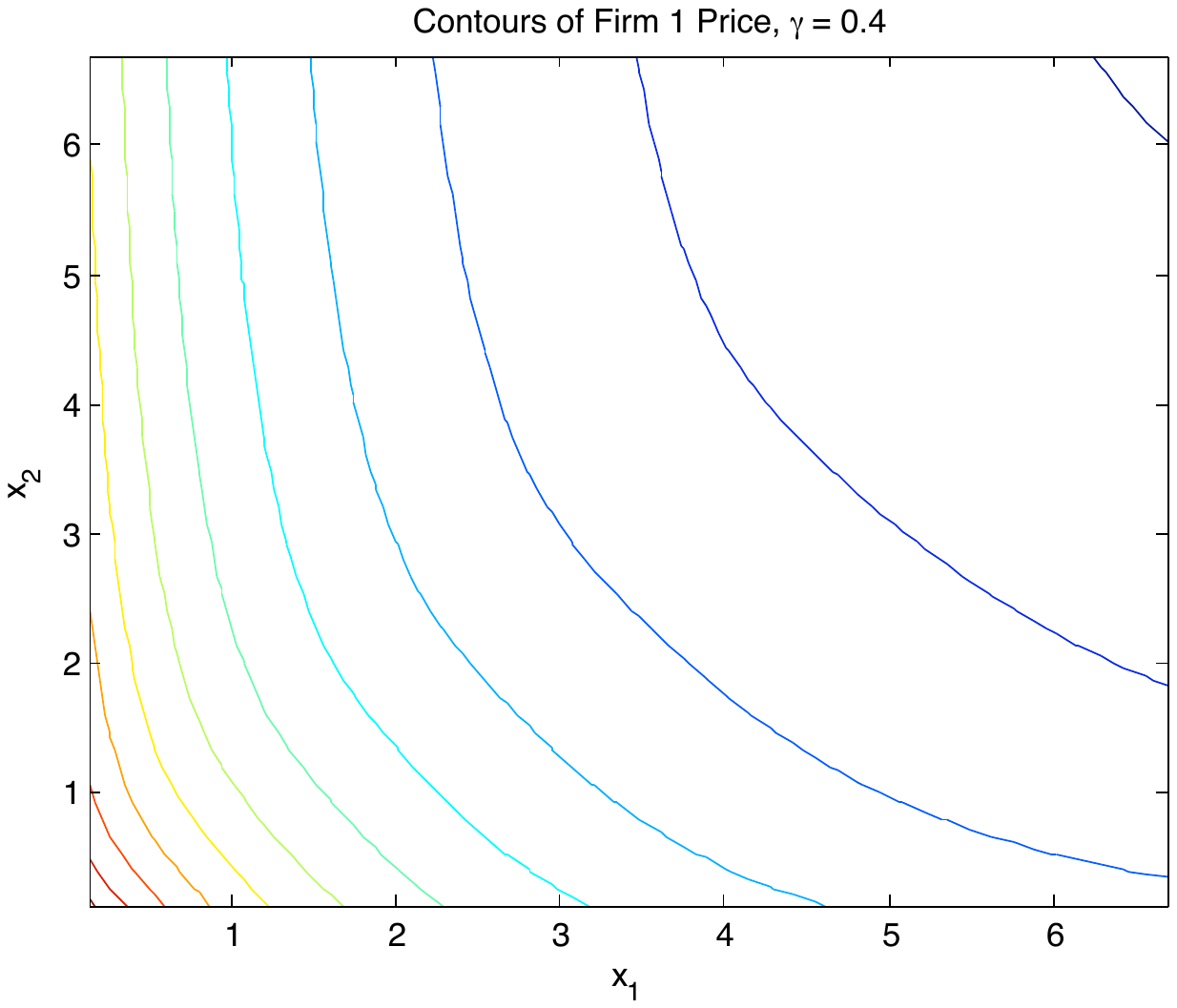}
    \label{contourP_0p4}
    }
    \caption{Contours of Firm 1 Price Strategy for $\gamma=0$ and $0.4$.}
    \label{contourP}
\end{figure}

\subsection{Game Simulations}
We can use our numerical solution of the value functions to obtain numerical versions of the equilibrium prices, $\bar p_i^{\star}(\mathbf{x}(t))$. These in turn can be used to obtain numerical versions of the equilibrium capacity trajectories over time by making use of \eqnref{eq:inventoryDynamics}, i.e.
\begin{equation*}
dx_i(t) = -D_i\left(\bar p_1^{\star}(x_1(t),x_2(t)),\bar p_2^{\star}(x_1(t),x_2(t))\right)~dt + \sigma_i~dW_i(t), \hspace{0.5cm} i=1,2.
\end{equation*}
We simulate paths of the Brownian motions in order to obtain paths of the game over time. We present an example of these paths in Figure \ref{fig:paths}\subref{fig:path_3} for $\gamma = 0.4$. This is just one example but it displays many features of the game. Here we started both firms with an initial capacity of $x_1(0) = x_2(0) = 10$. Over time these decrease, but because of the stochastic component of demand, we see that at any given moment in time, we cannot say with certainty which firm will necessarily have a higher level of capacity. In this example, initially Firm 2 gains an advantage, however we see as time progresses that Firm 2 runs out of capacity before Firm 1, and hence for a period of time Firm 1 has a monopoly.
\begin{figure}[htbp]
   \centering
  \subfigure[Path of Stochastic Game for both firms with $\gamma = 0.3$]{
   \includegraphics[width=3.1in]{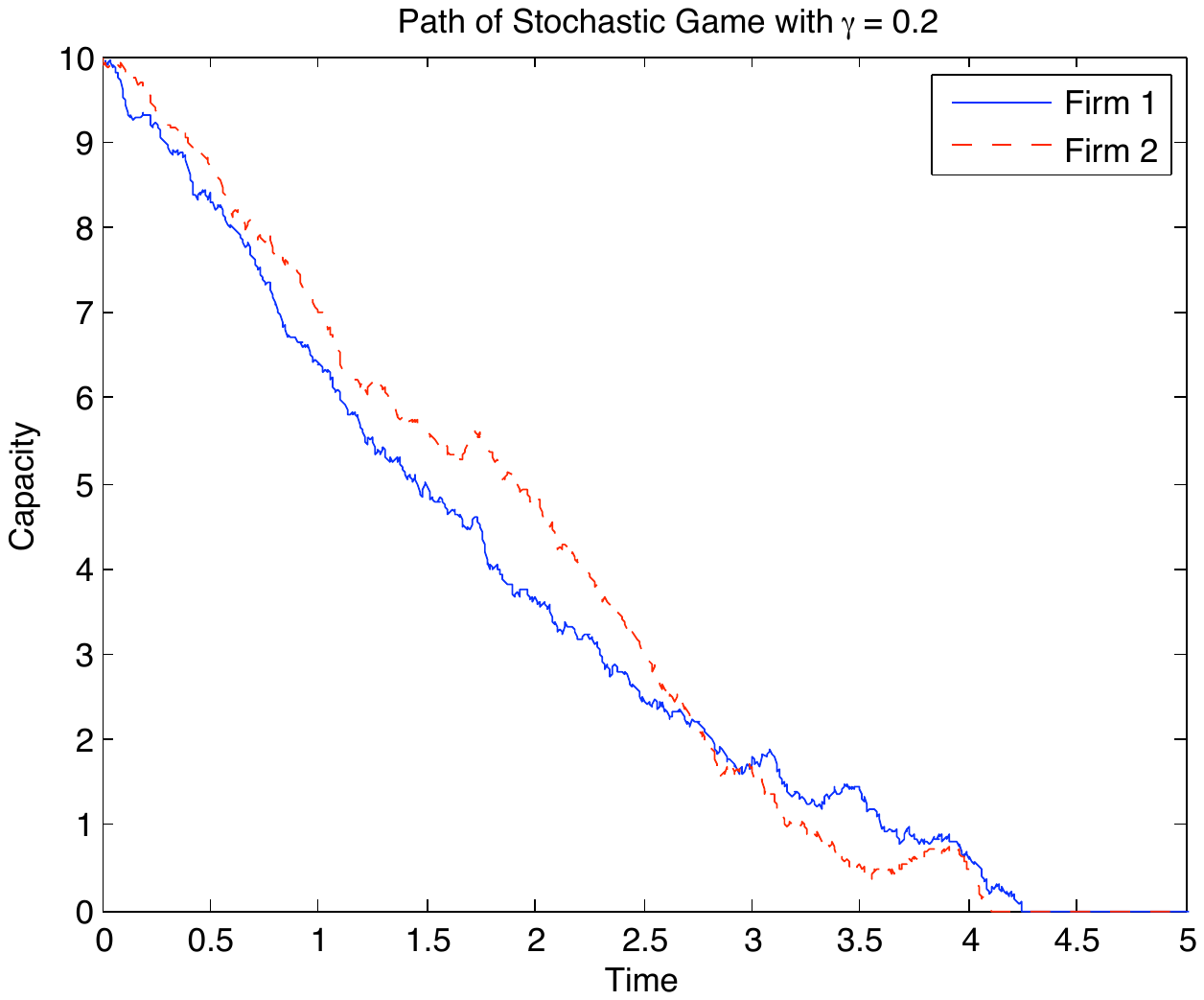}
     \label{fig:path_3}
     }
        \subfigure[Paths of game for firm 1 for varying $\gamma$]{
    \includegraphics[width=3.1in]{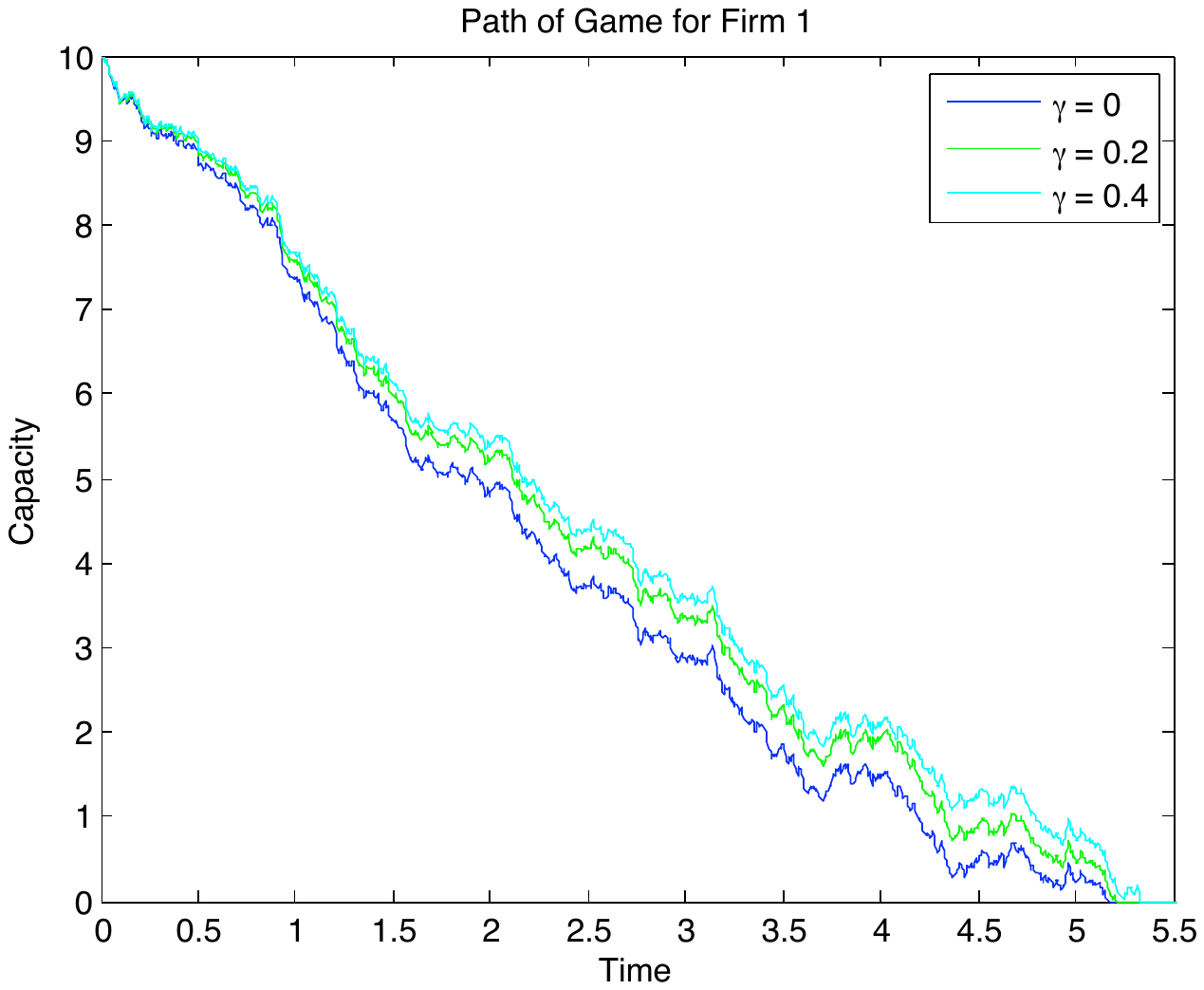}
    \label{fig:path_1}
    }
    \caption{Path of Stochastic Game}
    \label{fig:paths}
\end{figure}
For a fixed realization of the Brownian path, we plot in Figure \ref{fig:paths}\subref{fig:path_1}, the path of the capacity for Firm 1 for various values of $\gamma$. We see that as $\gamma$ increases, the time until the capacity of Firm 1 is exhausted, also appears to increase. This effect, where increased competition prolongs the lifetime of firms, is consistent with what we saw in Figure \ref{fig:pathOfGame_multGam} in the deterministic game.

\section{Conclusion}\label{sec:conclusion}
We have studied nonzero-sum stochastic differential games arising from Bertrand competitions, in particular, the case of a duopoly with linear demand functions. By considering the case where there is a small degree of substitutability between the firms' goods, we are able to construct an asymptotic approximation that captures many of the qualitative features of the ordinary differential game. Numerical solutions further provide insight into the stochastic case and where there is a higher degree of substitutability. In our study of the dynamic game, we concentrated on the case where both firms have positive demand in equilibrium. As we saw in the static game, it is possible for there to arise cases where this does not occur. However, the study of such cases remains an interesting open question in the analysis of these dynamic Bertrand games.

These tools allow us to quantify the effects of substitutability and relative firm size on prices, demands and profits. In particular, we find that consumers benefit the most when a market is structured with many firms of the same relative size producing highly substitutable goods. However, a large degree of substitutability does
not always lead to large drops in price, for example when two firms have a large difference in their size. That is, consumers benefit the most when firms are competing with other firms whom they deem to be credible threats. A competitor is a credible threat only if they have capacity large enough to match their competitors. For example, the existence of a very small coffee shop does not greatly affect the pricing decisions of Starbucks.

It is of interest to extend the analysis here to markets with nonlinear demand systems as well as to random demand environments. By the latter, we mean that baseline demand and substitutability may vary stochastically over time with economic conditions or consumer tastes.
For example, in the linear demand system \eqnref{eq:dNDefine}, there may be fluctuations in the intercept parameter $A$, which is a measure of the general level of demand due to business cycles and recessions, or in $C/B$, which is the measure of substitutability, as certain brands fall out of fashion, for example what Toyota is experiencing in 2010. Finally, an important related direction is to connect, in a probabilistic way, prices to the mechanism of customer arrival, for example, by making the arrival intensity a function of prices. Such models have been used in a variety of applications for the single-player price setting problem. However, their use in competitive markets leads to multi-player stochastic differential games with jumps, and is a direction we are currently pursuing.

\begin{appendix}
\section{Proof of Proposition \ref{prop:existenceOfChokePrice}}\label{appendix:proof}
We have assumed that $\hat{p}_N$ exists. We show that this implies $\hat{p}_{N-1}$ exists. The result for all $n$ will then follow by induction. Fix $p_1,\ldots,p_{N-2}$ and denote this vector of prices by $\rho$. Let $p_{N-1}$ be an arbitrary price. We denote $\hat{p}_N(\rho,p_{N-1})$ by $\tilde{p}$. We then have two cases, depending on $p_{N-1}$, under which we wish to show $D_{N-1}^{N-1}\left(\rho,p_{N-1}\right) < 0$. The first case is $p_{N-1} > \hat{p}_N(\rho,\tilde{p})$, which implies
\begin{equation*}
D^{N-1}_{N-1}\left(\rho,p_{N-1}\right) = D^N_{N-1}\left(\rho,p_{N-1},\tilde{p}\right) =  D^N_N\left(\rho,\tilde{p},p_{N-1}\right) < D^N_N\left(\rho,\tilde{p},\hat{p}\left(\rho,\tilde{p}\right)\right) = 0.
\end{equation*}
The second case we must consider is the reverse inequality, i.e. $p_{N-1} < \hat{p}_N(\rho,\tilde{p})$. This implies
\begin{equation*}
0 = D^N_N\left(\rho,p_{N-1},\tilde{p}\right) >  D^N_N\left(\rho,\hat{p}\left(\rho,\tilde{p}\right),\tilde{p}\right) = D^N_{N-1}\left(\rho,\tilde{p},\hat{p}\left(\rho,\tilde{p}\right)\right) = D^{N-1}_{N-1}\left(\rho,\tilde{p}\right).
\end{equation*}
Hence, regardless of the relative size of $p_{N-1}$ and $\hat{p}_N\left(\rho,\tilde{p}\right)$, we have that $D^{N-1}_{N-1}(p)$ is negative for some vector of prices $p$. We have ignored the case $p_{N-1} = \hat{p}_N(\rho,\tilde{p})$, because this would change the above inequalities to equalities and we would have the choke price we are looking for.

We know by Assumption \ref{assump:downwardSlopingDemand} that $D^{N-1}_{N-1}$ is a decreasing function of $p_{N-1}$. We have shown above that there exists some vector of prices $p$ such that $D^{N-1}_{N-1}(p) < 0$. We thus need only show that there exists some vector of prices $p$ where $D^{N-1}_{N-1}(p) > 0$. This will establish the existence of the choke price.

Fix some vector of prices $\bar{p} = \left(p_1,\ldots,p_{N-2},0\right)$, where $p_1,\ldots,p_{N-2}$ are arbitrary prices. Denote by $\underbar{0}$ the vector in $\mathbb{R}^{N-1}$ of all zeros. Recall that $D^N_N\left(\underbar{0},0\right) > D^N_N\left(\underbar{0},\hat{p}_N\left(\underbar{0}\right)\right) = 0$. This implies $\hat{p}_N\left(\underbar{0}\right) > 0$. We then have
\begin{equation*}
D^{N-1}_{N-1}\left(\bar{p}\right)  = D^N_{N-1}\left(\bar{p},\hat{p}_N\left(\bar{p}\right)\right)  \geq D^N_{N-1}\left(\bar{p},\hat{p}_N\left(\underbar{0}\right)\right)  > D^N_{N-1}\left(\bar{p},0\right) \geq D^N_{N-1}\left(\underbar{0},0\right)  = D^N_N\left(\underbar{0},0\right)  > 0.
\end{equation*}
Hence, for the vector $\bar{p}$ we have that $D^{N-1}_{N-1}(\bar{p})$ is positive. Therefore, there must exist some $\hat{p}_{N-1}$ for every set of prices $p_1,\ldots,p_{N-2}$ such that $D^{N-1}_{N-1}\left(p_1,\ldots,p_{N-2},\hat{p}_{N-1}\right) = 0$.

\section{Second-order approximation}\label{appendix:2ndOrderPDE}
We give the solution of the PDEs \eqnref{eqn:expansionPDE3} for $v_i^{(2)}$.
Using the solution to $v_i^{(1)}$ found in Proposition \ref{prop:solnOfPDE1}, making the same change of variables $(\xi,\eta) =
  \left(Q(x_1),Q(x_2)\right)$ and solving leads to
\begin{eqnarray}
v_1^{(2)} &=& \frac{\alpha^2}{8\beta^3}e^{-rQ(x_2)}\left[-\frac{Q(x_2)^2}{2}\left(\frac{2re^{rQ(x_2)}}{e^{rQ(x_2)}-1} + \frac{r\varphi_1}{1-e^{-rQ(x_1)}}\right) - \frac{Q(x_2)(1-\varphi_1)}{1-e^{-rQ(x_1)}} \right.\nonumber\\
& & \left.+ \frac{3}{2r}\left(e^{rQ(x_2)}-1\right) - \frac{\left(1-\varphi_1\right)^2}{2r\left(e^{rQ(x_2)}\left(1-e^{-rQ(x_1)}\right)\right)} + \frac{1-\varphi_1}{2r}\right. \nonumber \\
& & \left. -\frac{3}{r}\left(e^{rQ(x_2)} - \varphi_1^2 e^{-rQ(x_2)} - 1 + \varphi_1^2 - 2\varphi_1 r Q(x_2) \right) \right].\label{eq:v12FinalSolnTop}
\end{eqnarray}
for $x_1 > x_2$ where $\varphi_1 = \exp\left\{-r\left(Q(x_1)-Q(x_2)\right)\right\}$. For $x_2 > x_1$,
\begin{eqnarray}
v_1^{(2)} &=& \frac{\alpha^2}{8\beta^3}e^{-rQ(x_1)}\left[-\frac{Q(x_1)^2}{2}\left(\frac{re^{rQ(x_1)}}{e^{rQ(x_1)}-1} + \frac{2r\varphi_2}{1-e^{-rQ(x_2)}}\right) - \frac{2Q(x_1)(1-\varphi_2)}{1-e^{-rQ(x_2)}}\right.\nonumber\\
& & \left.+ \frac{3}{2r}\left(e^{rQ(x_1)}-1\right)-\frac{(1-\varphi_2)^2}{re^{rQ(x_1)}\left(1-e^{-rQ(x_2)}\right)}+\frac{1-\varphi_2}{r}\right. \nonumber\\
& & \left. -\frac{3}{r}\left(e^{rQ(x_1)} - 2rQ(x_1) - e^{-rQ(x_1)}\right) \right].\label{eq:v12FinalSolnBot}
\end{eqnarray}
where $\varphi_2 = \exp\left\{-r\left(Q(x_2)-Q(x_1)\right)\right\}$.
We omit the details of this lengthy calculation.

\section{$N$-player Stochastic Differential Game with Linear Demands}\label{append:sec:NplayerPDE}
Consider the $N$-player dynamic Bertrand game under the linear demand system introduced in Section \ref{sec:diffGame}. Within this section we explicitly assume that all firms participate in equilibrium, i.e. all firms receive positive demand. In the language of Section \ref{sec:StaticGame}, this would mean that the resulting dynamic equilibrium is of Type \ref{NEtype1}. Let $V_i(\mathbf{x})$ denote the value functions defined by the $N$-player analog of \eqnref{eq:Vi}. Then, the associated PDE system, analog of \eqnref{eq:PDEsystem1}, is
\begin{equation}
\mathcal{L}V_i + \sup_{p_i\geq 0}\left\{ - \sum_{k=1}^{N}D^N_k(p)\pderiv{V_i}{x_k} + p_iD^N_i(p)\right\} - rV_i = 0, ~~ i=1,\ldots,N,
\end{equation}
where we explicitly denote the dependence of the demand functions on $N$ to indicate the size of the vector $p$ in the argument. Here,
\begin{equation}
\mathcal{L} = \frac{1}{2}\sum_{i=1}^N\sum_{j=1}^N \Sigma_{ij}\pcderivs{V_i}{x_i}{x_j},
\end{equation}
where $\left(\Sigma_{ij}\right)_{i,j=1,\ldots,N}$ is the covariance matrix between the Brownian motions in \eqnref{eq:inventoryDynamics}.

For a fixed $i$, we note that we can write
\begin{eqnarray}
D^N_j(p) &=& -\frac{C}{B}D^N_i(p) + A\left(1+\frac{C}{B}\right) - B\left(1-\frac{C^2}{B^2}\right)p_j + C\left(1+\frac{C}{B}\right)\underset{k\neq i}{\sum_{k\neq j}}p_k \nonumber \\
&=& -\frac{C}{B}D^N_i(p) + D^{N-1}_{\pi_i(j)}\left(p_{-i}\right),
\end{eqnarray}
where $p_{-i} = p \backslash \{p_i\}$ and
\begin{equation}
\pi_i(k) = \left\{\begin{array}{ccl} k, & & k < i \\ k-1, & & k > i.\end{array}\right.
\end{equation}
This decomposition means that one can represent a firm's demand function at the level $N$ as a linear combination of another given firm's demand function at the level $N$ and their own demand function at the level $N-1$ with the given firm being removed. This is a consequence of the consistency of demand functions and the existence of choke prices.

Hence, using this decomposition, we have, for $i=1,\ldots,N$
\begin{equation}
\mathcal{L}V_i  - \sum_{k\neq i}^{N}D^{N-1}_{\pi_i(k)}(p_{-i})\pderiv{V_i}{x_k} + \sup_{p_i\geq 0}\left\{ D^N_i(p)\left(p_i-\pderiv{V_i}{x_i} + \frac{C}{B}\sum_{k\neq i}\pderiv{V_i}{x_k}\right)\right\} - rV_i = 0.
\end{equation}
Therefore, we can use the results of the static Bertrand game to write the PDEs as
\begin{equation}
\mathcal{L}V_i - \sum_{k\neq i}^{N}D^{N-1}_{\pi_i(k)}(p^{\star}_{-i})\pderiv{V_i}{x_k} + G_i^N\left(s_1,\ldots,s_N\right) = rV_i
\end{equation}
where the prices $p^{\star}$ are the solutions of the $N$-player static game with costs
\begin{equation}
S_i(\mathbf{x}) = \pderiv{V_i}{x_i} - \frac{C}{B}\sum_{k\neq i}\pderiv{V_i}{x_k}, \qquad i=1,\ldots,N
\end{equation}
and with corresponding profit functions $G_i^N\left(s_1,\ldots,s_N\right) = D^N_i(p^{\star})\left(p^{\star}_i - s_i\right)$.

The boundary conditions for this PDE depends on the PDEs that result by considering a market with $N-1$ firms. That is, on any edge of an orthant where $x_i = 0$ and $x_j > 0$ for all $j\neq i$, we have $V_i \equiv 0$ and that $V_j$ solve the same PDE problem except where firm $i$ is removed and thus there are $N-1$ firms in the market.  This is similar to the duopoly problem where the boundary condition depends on the monopoly problem. The remaining conditions can be worked out in a similar fashion.

\end{appendix}

\bibliographystyle{plainnat}
\small{\bibliography{refs_fin}}
\end{document}